
\documentclass[12pt,leqno,oneside]{amsart}
\usepackage{mathrsfs,dsfont}
\usepackage{amsmath,amstext,amsthm,amssymb,bbm}
\usepackage{charter}
\usepackage{typearea}
\usepackage{hyperref}
\usepackage{color}

\usepackage{float}
\usepackage{tikz}
\usetikzlibrary{patterns}
\usepackage{graphicx}
\usepackage{hyperref}

\usepackage[width=6.4in,height=8.5in]
{geometry}

\pagestyle{plain} \pagenumbering{arabic}

\newtheorem{Theorem}{Theorem}[section]
\newtheorem{Proposition}[Theorem]{Proposition}
\newtheorem{Lemma}[Theorem]{Lemma}
\newtheorem{Corollary}[Theorem]{Corollary}
\theoremstyle{definition}

\newtheorem{Example}[Theorem]{Example}

\numberwithin{equation}{section}


\DeclareMathOperator{\supp}{supp}

\DeclareMathOperator{\Prob}{Prob}
\newcommand{\cali}[1]{\mathscr{#1}}
 
\newcommand{\cT}{\cali{T}}

\newcommand{\field}[1]{\mathbb{#1}}

\newcommand{\R}{\field{R}}
\newcommand{\C}{\field{C}}

\newcommand{\E}{\mathbb{E}}
\newcommand{\mO}{\mathcal{O}}

\newcommand{\hp}{H^0_{(2)}(X,L_p)}

\newcommand{\FS}{{{_\mathrm{FS}}}}

\newcommand{\comment}[1]{}

\begin{document}

\title{A survey on zeros of random holomorphic sections}

\author{Turgay Bayraktar} 
\thanks{T.\ Bayraktar is partially supported by T\"{U}B\.{I}TAK B\.{I}DEB-2232/118C006}
\address{Faculty of Engineering and Natural Sciences, Sabanc{\i} University, \.{I}stanbul, Turkey}
\email{tbayraktar@sabanciuniv.edu}

\author{Dan Coman}
\thanks{D.\ Coman is partially supported by the NSF Grant DMS-1700011}
\address{Department of Mathematics, 
Syracuse University, Syracuse, NY 13244-1150, USA}
\email{dcoman@syr.edu}

\author{Hendrik Herrmann}
\address{Univerisit\"at zu K\"oln, Mathematisches institut,
Weyertal 86-90, 50931 K\"oln, Germany}
\email{post@hendrik-herrmann.de}

\author{George Marinescu}
\address{Univerisit\"at zu K\"oln, Mathematisches institut,
Weyertal 86-90, 50931 K\"oln, Germany 
\newline\mbox{\quad}\,Institute of Mathematics `Simion Stoilow', 
Romanian Academy, Bucharest, Romania}
\email{gmarines@math.uni-koeln.de}
\thanks{H.\ Herrmann and G.\ Marinescu are partially supported by the DFG funded project CRC/TRR 191.}
\thanks{The authors were partially funded through the Institutional Strategy 
of the University of Cologne within the German Excellence Initiative (KPA QM2)}

\subjclass[2010]{Primary 32A60, 60D05; Secondary 32L10, 32C20, 32U40, 81Q50.}
\keywords{Random polynomials, equilibrium measures, singular Hermitian metrics, compact normal K\"ahler complex spaces, zeros of random holomorphic sections}

\date{July 9, 2018}
\dedicatory{Dedicated to our friend Norm Levenberg 
on the occasion of his 60th birthday}

\begin{abstract}
We survey results on the distribution of zeros
of random polynomials and of random holomorphic 
sections of line bundles, especially for large classes 
of probability measures
on the spaces of holomorphic sections. 
We provide furthermore some new examples of measures 
supported in totally 
real subsets of the complex probability space.
\end{abstract}

\maketitle

\tableofcontents

\section{Introduction}
The main purpose of this paper is to review some 
results on the distribution of zeros
of random polynomials and more generally of random holomorphic sections 
of line bundles.
A general motivating question is the following. 
If the coefficients of a polynomial are subject to random error, 
the zeros of
this polynomial will also be subject to random error. 
It is natural to enquire how the latter errors depend upon the former.
One considers therefore polynomials whose coefficients
are independent identically distributed random variables and studies the
statistical properties of the zeros, such as the number 
of real zeros and uniformity of the zero distribution.
Many classical works are devoted to this circle of ideas: 
Bloch-P{\'o}lya \cite{BPo},
Littelwood-Offord \cite{LO,LO1,LO2}, Erd\H{o}s-Tur{\'a}n \cite{ET}, 
Kac \cite{Kac}, Hammersley \cite{Ham56}.

The distribution of zeros of polynomials with random coefficients
is also relevant for problems which naturally arise
in the context of quantum chaotic dynamics or in other domains 
of physics \cite{BBH96,NoVo:98}.
One can view zeros of polynomials in one complex variable as 
interacting particles in two dimensions, as, for instance, 
eigenvalues of random asymmetric matrices can be physically interpreted 
as a two-dimensional electron gas confined in a disk.

There is an interesting connection between equidistribution of 
zeros and Quantum Unique Ergodicity related to a conjecture of 
Rudnik-Sarnak \cite{RuSa:94} about the behavior of 
high energy Laplace eigenfunctions on a Riemannian manifold. 
By replacing Laplace eigenfunctions with modular forms 
one is lead to study of the equidistribution of zeros of 
Hecke modular forms, see
Holowinsky-Soundararajan \cite{HolSou:10}.

The first purpose of this survey is to review some results on 
asymptotic equilibrium distribution of zeros of polynomials
which arose from the work of Bloom \cite{Bl05,Bl09} and Bloom-Levenberg
\cite{BL15}. They pointed out the role of
the extremal plurisubharmonic functions in the equidistribution result.
Bloom \cite{Bl05,Bl09} also introduced the Bernstein-Markov 
measures as a convenient 
general framework for defining the Gaussian random sections.
A conceptual and very fruitful approach
is to introduce an inner product on the space of polynomials
which induces the Gaussian probability measure and to
consider the asymptotics of the Bergman kernel associated to this 
inner product \cite{Bl05,Bl09,BL15,ShZ03}.
%
%
This is generalized in Theorem \ref{BL2}
for the weighted global extremal function of a locally regular weighted closed set.
We will moreover study the distribution of common zeros of
a $k$-tuple of polynomials in Section \ref{S:rpm}
and give a central limit theorem for the linear statistics of the zeros in Section
\ref{S:clt}. 
 
In the second part of the paper we consider random holomorphic 
sections of line bundles over complex manifolds with respect
to weights and measures satisfying some quite weak conditions.
Polynomials of degree at most $p$ on
$\C$ generalize to the space $H^0(X, L^p)$ of holomorphic sections of the $p$-th 
power of an ample line bundle $L\to X$ over any complex manifold of dimension $n$. 
The weight  used in the case of polynomials generalizes
to a Hermitian metric $h$ on $L$. In the case $h$ is smooth
and has positive curvature $\omega=c_1(L,h)$, Shiffman-Zelditch \cite{ShZ99}
showed that the asymptotic equilibrium distribution 
(see also \cite{NoVo:98} for genus one
surfaces in dimension one) of zeros of sections in $H^0(X,L^p)$ as
$p\to\infty$ is given by the volume form $\omega^n/n!$
of the metric. Dinh-Sibony \cite{DS06} introduced another
approach, which also gives an estimate of the speed of convergence
of zeros to the equilibrium distribution (see also \cite{DMS} for the
non-compact setting).
This result was generalized for singular metrics
whose curvature is a K\"ahler current in \cite{CM11} and for
sequences of line bundles over normal complex spaces in \cite{CMM}
(see also \cite{CM13,CM13b,DMM}).
In Section \ref{S:equidist} we review results from \cite{BCM}
where we generalize the setting of \cite{CMM} for probability
measures satisfying a very general moment condition (see Condition (B)
therein), including
measures with heavy tail and small ball probability. 
Important examples are provided by the Guassians and the Fubini-Study
volumes. 

In Section \ref{S:mtrs}, we provide some new examples of measures 
that satisfy the moment condition. They have support contained in totally 
real subsets of the complex probability space.

Finally, in Section \ref{S:pictures} we illustrate the 
equilibrium distribution of zeros by pictures of the zero divisors for the case
of polynomials with inner product over the square in the plane
and for $SU_2$ polynomials.

\section{Random Polynomials on $\mathbb{C}^n$}\label{S:I}
In this section we survey some statistical properties 
of zeros of random polynomials associated with a locally regular set 
$Y\subset\C^n$ and a weight function $\varphi:Y\to\mathbb{R}$. 
Classical Weyl polynomials arise as a special case. 
We denote by $\lambda_{2n}$ the Lebesgue measure on 
$\C^n\simeq\R^{2n}$. We also denote by $\C[z]$ 
the space of polynomials in $n$ complex variables, 
and we let $\C_p[z]=\{f\in\C[z]:\,\deg f\leq p\}$. 
Recall that $d=\partial+\overline{\partial}$ and 
$d^c:=\frac{i}{2\pi}(\overline{\partial}-\partial)$, 
so that $dd^c=\frac{i}{\pi}\partial\overline{\partial}$.

\subsection{Weighted global extremal function}
Let $Y\subset \C^n$ be a (possibly) unbounded  closed set and 
$\varphi:Y\to \mathbb{R}$ be a continuous function. 
If $Y$ is unbounded, we assume that  there exists $\epsilon>0$ such that
\begin{equation}\label{gr}
\varphi(z)\geq (1+\epsilon)\log|z|\ \text{for}\ |z|\gg1.
\end{equation}
We note that this framework includes the case when $Y$ 
is compact and $\varphi:Y\to \mathbb{R}$ is any continuous function. 
Following \cite[Appendix B]{SaffTotik} we introduce the 
\textit{weighted global extremal function},
\begin{equation}
V_{Y,\varphi}(z):=\sup\{u(z): u\in \mathcal{L}(\C^n), u\leq \varphi\ \text{on}\ Y\}\,,
\end{equation}
where $\mathcal{L}(\C^n)$ denotes the \textit{Lelong class} 
of plurisubharmonic (psh) functions $u$ that satisfies 
$$u(z)-\log^+|z|=O(1)$$ where $\log^+=\max(\log,0)$. 
We let 
$$\mathcal{L}^+(\mathbb{C}^m):=
\{u\in \mathcal{L}(\mathbb{C}^m): u(z)\geq \log^+|z|+C_u\ 
\text{for some}\ C_u\in \mathbb{R}\}.$$
In what follows, we let 
 $$g^*(z):=\limsup_{w\to z}g(w)$$
denote the upper semi-continuous regularization of $g$. 
Seminal results of Siciak and Zaharyuta (see \cite[Appendix B]{SaffTotik} 
and references therein) imply that 
$V^*_{Y,\varphi}\in\mathcal{L}^+(\mathbb{C}^m)$ and that  
$V_{\varphi}$ verifies
 \begin{equation}\label{envelope}
 V_{Y,\varphi}(z)=\sup\left\{\frac{1}{\deg f}\,\log|f(z)|:\,
 f\in\C[z]\,,\;\sup_{z\in Y}|f(z)|e^{-(\deg f)\varphi(z)}\leq 1\right\}.
 \end{equation}
 For $r>0$ let us denote 
 \[Y_r:=\{z\in Y:\,|z|\leq r\}\,.\] 
 Then it follows that $V_{Y,\varphi}=V_{Y_r,\varphi}$ 
 for sufficiently large $r\gg1$ (\cite[Appendix B, Lemma 2.2]{SaffTotik}). 
 
 A closed set $Y\subset \C^n$ is said to be \textit{locally regular at} $w\in Y$  if for every $\rho>0$ the extremal function $V_{Y\cap \overline{B(w,\rho)}}(z)$ is continuous at $w$. The set $Y$ is called locally regular if $Y$ is locally regular at each $w\in Y$. A classical result of Siciak \cite{Siciak} asserts that of $Y$ is locally regular and $\varphi$ is continuous weight function then the weighted extremal function $V_{Y,\varphi}$ is also continuous and hence $V_{Y,\varphi}=V_{Y,\varphi}^*$ on $\C^n$. In the rest of this section we assume that $Y$ is a locally regular closed set.

The psh function $V_{Y,\varphi}$ is locally bounded on $\C^n$ 
and hence by the Bedford-Taylor theory \cite{BT1,BT2} 
the \textit{weighted equilibrium measure} 
$$\mu_{Y,\varphi}:=\frac{1}{n!}(dd^cV_{Y,\varphi})^n$$ 
is well-defined and does not put any mass on pluripolar sets. 
Moreover, if $S_{Y,\varphi}:=\supp\mu_{Y,\varphi}$ then by 
\cite[Appendix B]{SaffTotik} we have 
$S_{Y,\varphi}\subset \{V_{Y,\varphi}=\varphi\}$. 
Hence, the support $S_{Y,\varphi}$ is a compact set. 
An important example is $Y=\C^n$ and 
$\varphi(z)=\frac{\|z\|^2}{2}$, which gives 
$\mu_{Y,\varphi}=\mathbbm{1}_{B}\,d\lambda_{2n}(z)$, 
where $\mathbbm{1}_{B}$ denotes the characteristic function 
of the Euclidean unit ball in $\C^n$.

A locally finite Borel measure $\nu$ is called a Berstein-Markov (BM) 
measure for the weighted set $(Y,\varphi)$ if $\nu$ is supported on $Y$,
\begin{equation}\label{gr2}
 \int_{Y\setminus Y_1}\frac{1}{|z|^a}\,d\nu(z)<\infty\ \text{for some}\ a>0\,,
\end{equation}
and $(Y_r,Q,\nu)$ satisfies the following weighted Bernstein-Markov inequality 
for all $r$ sufficiently large (see \cite[Section 6]{BL15}): 
there exist $M_p=M_p(r)\geq 1$ so that 
$\limsup_{p\to\infty} M_p^{1/p}=1$ and   
\begin{equation}\label{bm}
 \|fe^{-p\varphi}\|_{Y_r}:=
 \max_{z\in Y_r}|f(z)|e^{-p\varphi(z)}\leq 
 M_p \|fe^{-p\varphi}\|_{L^2(Y_r,\nu)}\,,\,\text{ for all }f\in\C_p[z].
\end{equation}
Conditions (\ref{gr}) and (\ref{gr2}) ensure that the measure $e^{-2n\varphi}d\nu$  
has finite moments of order up to $n$, while  condition (\ref{bm}) 
implies that asymptotically the $L^2(\nu)$ and $\sup$ norms 
of weighted polynomials are equivalent. We also remark that 
BM-measures always exist \cite{BMsurvey}. 

Let $d_p:=\dim\C_p[z]$.
We define an inner product on the space $\C_p[z]$ by 
\begin{equation}\label{n1}
(f,g)_{p}:=\int_Yf(z)\overline{g(z)}e^{-2p\varphi(z)}\,d\nu(z)\,.
\end{equation}
It gives the norm
\begin{equation}\label{n}
\|f\|_{p}^2:=\int_Y|f(z)|^2e^{-2p\varphi(z)}\,d\nu(z)\,.
\end{equation}
Let $\{P^p_j\}_{j=1}^{d_p}$ be a fixed orthonormal basis  for 
$\C_p[z]$ with respect to the inner product \eqref{n1}.

\subsection{Asymptotic zero distribution of random polynomials}
\label{random}
A \textit{random polynomial} has the form 
\begin{equation}\label{rp}
f_p(z)=\sum_{j=1}^{d_p}a^p_jP_j^p(z)\,,
\end{equation}            
where $a^p_j$ are independent identically distributed (i.i.d) complex valued 
random variables. In this survey, we assume that distribution law of $a_j^p$ is of the form 
${\bf{P}}:=\phi(z) \,d\lambda_{2n}(z)$ where 
$\phi:\mathbb{C}\to [0,\infty)$ is a bounded function. 
Moreover, we assume that {\bf{P}} 
has sufficiently fast tail decay probabilities, 
that is for sufficiently  large $R>0$
\begin{equation}\label{ta}
{\bf{P}}\{a_j^p\in\mathbb{C}: |a_j^p|>e^R\}=O(R^{-\rho})\,,
\end{equation}
for some $\rho>n+1$. Recall that in the case 
of standard complex Gaussians the tail decay of the above integral is of 
order $e^{-R^2}$. We use the identification 
$\C_p[z]\simeq \mathbb{C}^{d_p}$ 
to endow the vector space $\C_p[z]$ with a $d_p$-fold product probability 
measure $\Prob_p$. We also consider the product probability space 
$\prod_{n=1}^{\infty}(\C_p[z],\Prob_p)$ whose elements are 
sequences of random polynomials. We remark that the probability space 
$(\C_p[z],\Prob_p)$ depends on the choice of ONB 
(i.\,e.\ the unitary identification 
$\C_p[z]\simeq \mathbb{C}^{d_p}$ given by (\ref{rp})) 
unless $\Prob_p$ is invariant under unitary transformations 
(eg.\ Gaussian ensemble).
 
\begin{Example}
For $Y=\mathbb{C},$ the weight $\varphi(z)=|z|^2/2$, 
and $\nu=\lambda_{2n}$,  $P^p_j(z)=\sqrt{\frac{p^{j+1}}{\pi j!}}\,z^j$ 
form an ONB for $\C_p[z]$ with respect to the norm $(\ref{n})$. 
Hence, a random polynomial is of the form
$$f_p(z)=\sum_{j=0}^{p}a_j^p\,\sqrt{\frac{p^{j+1}}{\pi j!}}\,z^j.$$ 
The scaled polynomials 
$$W_p(z)=\sum_{j=0}^pa_j^p\,\frac{1}{\sqrt{j!}}\,z^j$$ 
are known in the literature as Weyl polynomials. 
It follows from Theorem \ref{BL2} below that the zeros of $f_p$ 
are equidistributed with respect to the to normalized Lebesgue measure 
on the unit disk in the plane.
\end{Example}
For a random polynomial $f_p$ we denote its zero divisor in $\C^n$ by $Z_{f_p}$
and by $[Z_{f_p}]$ the current of integration along the divisor $Z_{f_p}$.
We also define the \textit{expected zero current} by
\begin{equation}
\langle\mathbb{E}[Z_{f_p}],\Phi\rangle:=
\int_{\C_p[z]}\langle [Z_{f_p}],\Phi\rangle\,d\Prob_p\,,
\end{equation} 
where $\Phi$ is a bidegree $(n-1,n-1)$ test form. 
One of the key results in \cite{BL15} (see also \cite{B6,B7}) is the following:

\begin{Theorem}\label{BL2}
Let $a_j^p$ be complex random variables verifying (\ref{ta}). Then
\begin{equation}\label{exdist}
\lim_{p\to \infty}\frac1p\,\mathbb{E}[Z_{f_p}]=dd^c V_{Y,\varphi}
\end{equation}
in the sense of currents. Moreover, almost surely
\begin{equation}\label{asdist}
\lim_{n\to \infty}\frac1p\,[Z_{f_p}]=dd^c V_{Y,\varphi}\,,
\end{equation} 
in the sense of currents.
\end{Theorem}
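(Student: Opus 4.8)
The plan is to reduce both statements to the asymptotics of the Bergman kernel function together with the typical size of the coefficient vector, via the Poincar\'e--Lelong formula $[Z_{f_p}]=dd^c\log|f_p|$. Writing $u_p(z)=(P_1^p(z),\dots,P_{d_p}^p(z))$ and $a^p=(a_1^p,\dots,a_{d_p}^p)$, and introducing the Bergman kernel function $K_p(z):=\sum_{j=1}^{d_p}|P_j^p(z)|^2=\|u_p(z)\|^2$, I factor $|f_p(z)|=\sqrt{K_p(z)}\,|v_p(z)|$, where $v_p(z)=\sum_{j=1}^{d_p}a_j^p\,e_{p,j}(z)$ and $e_{p,j}(z)=P_j^p(z)/\sqrt{K_p(z)}$ has unit norm. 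Thus $\frac1p\log|f_p(z)|=\frac{1}{2p}\log K_p(z)+\frac1p\log|v_p(z)|$: the first term is deterministic, the second carries all the randomness. Since $dd^c$ is continuous for the $L^1_{loc}$ topology on psh functions, it suffices to control these two terms.

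First I would record the deterministic input $\frac{1}{2p}\log K_p\to V_{Y,\varphi}$ locally uniformly on $\C^n$. This follows from the extremal description $K_p(z)=\sup\{|f(z)|^2:f\in\C_p[z],\ \|f\|_p\le 1\}$, the weighted Bernstein--Markov inequality (\ref{bm}) (which makes $\|\cdot\|_p$ and the weighted sup norm asymptotically comparable), and the Siciak--Zaharyuta identity (\ref{envelope}); continuity of $V_{Y,\varphi}$ (local regularity of $Y$) together with the reduction $V_{Y,\varphi}=V_{Y_r,\varphi}$ for $r\gg1$ lets one pass between the unbounded set $Y$ and the compact pieces $Y_r$, while (\ref{gr}) controls the behaviour at infinity.

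For the expected current (\ref{exdist}) I would take expectations and use Fubini to write $\mathbb{E}[Z_{f_p}]=dd^c\,\mathbb{E}[\log|f_p|]$ (legitimate once $\mathbb{E}|\log|f_p||$ is seen to be locally integrable), so that $\frac1p\mathbb{E}[\log|f_p(z)|]=\frac{1}{2p}\log K_p(z)+\frac1p\mathbb{E}[\log|v_p(z)|]$. The point is that $\frac1p\mathbb{E}[\log|v_p(z)|]\to 0$: an upper bound comes from $|v_p(z)|\le\|a^p\|$ and the tail estimate (\ref{ta}), which forces $\mathbb{E}[(\log\|a^p\|)^+]=o(p)$, while the lower bound (no escape to $-\infty$) follows from a small-ball estimate $\Prob\{|v_p(z)|\le e^{-R}\}\le C d_p e^{-2R}$, valid because $\mathbf{P}$ has bounded density $\phi$. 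Combined with the previous paragraph this gives (\ref{exdist}).

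For the almost sure statement (\ref{asdist}) I would prove two bounds and feed them into the compactness theory of psh functions. The upper bound is a Borel--Cantelli argument: since $d_p=O(p^n)$ and $\Prob\{|a_j^p|>e^{\epsilon p}\}=O((\epsilon p)^{-\rho})$ with $\rho>n+1$, the events $\{\max_j|a_j^p|>e^{\epsilon p}\}$ are summable, hence $\frac1p\log\|a^p\|\to 0$ almost surely and $\limsup_p\frac1p\log|f_p|\le V_{Y,\varphi}$. The lower bound is again Borel--Cantelli, now via $\Prob\{|v_p(z)|\le e^{-\epsilon p}\}\le C d_p e^{-2\epsilon p}=O(p^n e^{-2\epsilon p})$, which is summable; this yields, for each fixed $z$, $\liminf_p\frac1p\log|f_p(z)|\ge V_{Y,\varphi}(z)$ almost surely, hence by Fubini for almost every $z$ almost surely. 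To conclude I invoke the dichotomy for locally uniformly bounded-above psh sequences: the pointwise lower bound excludes escape to $-\infty$, so every subsequence of $\frac1p\log|f_p|$ has a further subsequence converging in $L^1_{loc}$ to a psh limit $\le V_{Y,\varphi}$; passing to an a.e.\ convergent sub-subsequence, the lower bound forces that limit to be $\ge V_{Y,\varphi}$ a.e., hence equal to $V_{Y,\varphi}$, and as the limit is always the same the full sequence converges, so $dd^c$-continuity gives (\ref{asdist}). I expect the \emph{main obstacle} to be the small-ball control: it is the only place where the hypothesis that $\mathbf{P}$ has bounded density is genuinely used, it must survive the passage to the $L^1_{loc}$ limit, and without it the coefficients could concentrate near zero and let mass escape to $-\infty$; the remaining pluripotential-theoretic steps are comparatively soft.
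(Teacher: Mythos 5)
Your proposal is correct and follows essentially the same route as the paper indicates: the deterministic input is the Bergman kernel asymptotics $\frac{1}{2p}\log K_p\to V_{Y,\varphi}$ (the paper's cited key ingredient, \cite[Proposition 3.1 and (6.5)]{BL15}), combined with the standard splitting $\frac1p\log|f_p|=\frac{1}{2p}\log K_p+\frac1p\log|\langle a^p,e_p(z)\rangle|$, tail/Borel--Cantelli bounds using $\rho>n+1$ and $d_p=O(p^n)$, the bounded-density small-ball estimate, and the $L^1_{loc}$ compactness of plurisubharmonic functions. No gaps worth flagging.
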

The key ingredient in the proof is a result about the asymptotic behavior
of the Bergman kernel of the space $(\C_p[z], (\cdot,\cdot)_p)$, see
\cite[Proposition 3.1 and (6.5)]{BL15}.

Recently, Theorem \ref{BL2} (\ref{asdist}) 
has been generalized by the first named author \cite{B10} 
to the setting of discrete random coefficients. 
More precisely, for a radial $\mathscr{C}^2$ weight function $\varphi$, 
assuming the random coefficients $a_j$ are non-degenerate 
it is proved that the moment condition
\begin{equation}
\mathbb{E}[(\log(1+|a_j|))^n]<\infty
\end{equation}
is necessary and suficifient for almost sure convergence \eqref{asdist} 
of the zero currents. More recently, in the unweighted setting 
(i.\,e.\ $\varphi\equiv 0$) in $\mathbb{C}^n$, 
Bloom-Dauvergne \cite{BloomD} obtained another generalization: 
for a regular compact set $K\subset \C^n$, 
zeros of random polynomials $f_p$ with i.i.d.\ coefficients 
verifying the tail assumption
\begin{equation}\label{ta2}
{\bf{P}}\{a_j\in\mathbb{C}: |a_j|>e^R\}= o(R^{-n})\,,
\quad R\to\infty,
\end{equation}
satisfy as $p\to\infty$,
$$\frac1p\,[Z_{f_p}]\to dd^cV_K \ \hbox{in probability}.$$

\subsection{Random polynomial mappings}\label{S:rpm} 
Next, we will give an extension of Theorem \ref{BL2} to the setting of 
random polynomial mappings. For $1\leq k\leq n$ we consider 
$k$-tuples $f_p:=(f_p^1,\dots,f_p^k)$ of random polynomials $f_p^j$ 
which are chosen independently at random from $(\C_p[z],\Prob_p)$. 
This gives a probability space $\big((\C_p[z])^k, \mathcal{F}_p\big)$. 
We also consider the product probability space 
$$\mathcal{H}_k:=\prod_{p=1}^{\infty}\big(\C_p[z]^k, \mathcal{F}_p\big). $$  
A \textit{random polynomial mapping} is of the form 
$$F_p:\C^n\to\C^k$$
$$F_p(z):=\big(f_p^1(z),\dots,f_p^k(z)\big)$$ 
where $f_p^j\in (\C_p[z],\Prob_p)$ are independent random polynomials.  
We also denote by $$\|F_p(z)\|^2:=\sum_{j=1}^k|f_p^j(z)|^2.$$
Note that since random coefficients $a_j^p$ have continuous distributions, by Bertini's Theorem the zero divisors $Z_{f_p^j}$ 
are smooth and intersect transversally for almost every system 
$(f_p^1,\dots,f_p^k)\in\C_p[z]^k$. 
Thus, the simultaneous zero locus 
\begin{eqnarray*}
Z_{f_p^1,\dots,f_p^k}&:=&\{z\in\C^n: \|F_p(z)\|=0\}\\
&=&\{z\in\C^n: f_p^1(z)=\dots=f_p^k(z)=0\}
\end{eqnarray*} is a complex submanifold of codimension $k$ 
and obtained as a complete intersection of individual zero loci. 
This implies that 
\begin{eqnarray*}
(dd^c\log\|F_p\|)^k &=&[Z_{f_p^1,\dots,f_p^k}]\\
&=& dd^c\log|f_p^1|\wedge\dots\wedge dd^c\log|f_p^k|
\end{eqnarray*}
 Then using an inductive argument (\cite[Corollary 3.3]{B6} 
 see also \cite{ShZ08, BloomS}) one can obtain ``probabilistic Lelong-Poincar\'e" 
 formula for the expected zero currents 
\begin{eqnarray}
\mathbb{E}[Z_{f_p^1,\dots,f_p^k}] &=
& \mathbb{E}[dd^c\log|f_p^1|\wedge\dots\wedge dd^c\log|f_p^k|]\\ \nonumber
&=& \mathbb{E}[Z_{f_p^1}]\wedge \dots \wedge \mathbb{E}[Z_{f_p^k}].
\end{eqnarray}
Then the following is an immediate corollary of  the uniform convergence 
of Bergman kernels to the weighted global extremal function 
\cite[Proposition 3.1 and (6.5)]{BL15}
together with a theorem of Bedford and Taylor \cite[\S 7]{BT2} 
on the convergence of Monge-Amp\`ere measures:
\begin{Corollary}
Under the hypotheses of Theorem \ref{BL2} and for each $1\leq k\leq n,$ we have
as $p\to\infty$,
$$\lim_{p\to \infty} \frac{1}{p^k}\,\mathbb{E}[Z_{f_p^1,\dots,f_p^k}]=
(dd^cV_{Y,\varphi})^k.$$
\end{Corollary}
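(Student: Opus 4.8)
The plan is to reduce the statement to the single-section result of Theorem~\ref{BL2} by means of the probabilistic Lelong--Poincar\'e formula established just above, and then to pass the resulting wedge product to the limit using a continuity property of the Monge--Amp\`ere operator. Since the polynomials $f_p^1,\dots,f_p^k$ are chosen independently from the same ensemble $(\C_p[z],\Prob_p)$, the factorization
$$\mathbb{E}[Z_{f_p^1,\dots,f_p^k}]=\mathbb{E}[Z_{f_p^1}]\wedge\dots\wedge\mathbb{E}[Z_{f_p^k}]$$
holds, and by the Lelong--Poincar\'e formula each factor equals $dd^c\,\mathbb{E}[\log|f_p^1|]=p\,dd^c u_p$, where $u_p:=\tfrac1p\,\mathbb{E}[\log|f_p^1|]$ is a common potential independent of the index $j$. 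Dividing by $p^k$ and absorbing the scalar $p$ into each slot of the multilinear wedge, I would rewrite the left-hand side as $(dd^c u_p)^k$, so that the entire statement becomes the convergence of these Monge--Amp\`ere measures.

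The second step is to identify the limit of the potentials $u_p$. The Bergman kernel asymptotics of \cite[Proposition 3.1 and (6.5)]{BL15}, which already underlie the proof of Theorem~\ref{BL2}, give $u_p\to V_{Y,\varphi}$ locally uniformly on $\C^n$ as $p\to\infty$; in particular the $u_p$ are psh and uniformly locally bounded. Because $Y$ is assumed locally regular and $\varphi$ is continuous, Siciak's theorem \cite{Siciak} quoted earlier guarantees that $V_{Y,\varphi}=V_{Y,\varphi}^*$ is continuous, hence locally bounded, which is exactly the regularity of the limit needed to make sense of the current $(dd^cV_{Y,\varphi})^k$.

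Finally, I would invoke the Bedford--Taylor convergence theorem \cite[\S7]{BT2}: for a locally uniformly convergent sequence of locally bounded psh functions, the associated Monge--Amp\`ere measures converge weakly. Applying this $k$-fold to $u_p\to V_{Y,\varphi}$ yields
$$\lim_{p\to\infty}\frac{1}{p^k}\,\mathbb{E}[Z_{f_p^1,\dots,f_p^k}]=\lim_{p\to\infty}(dd^c u_p)^k=(dd^cV_{Y,\varphi})^k$$
in the sense of currents, which is the assertion.

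The main obstacle is precisely this last step: the wedge product of closed positive currents is \emph{not} continuous under mere weak convergence, so one cannot naively pass to the limit in $(dd^c u_p)^k$. What rescues the argument is that the convergence $u_p\to V_{Y,\varphi}$ supplied by the Bergman kernel is locally uniform rather than only weak, and that the limit is continuous by local regularity of $Y$; these are exactly the hypotheses under which the Bedford--Taylor theorem applies. A secondary point to verify is the probabilistic Lelong--Poincar\'e factorization itself, i.e.\ that expectation commutes with the wedge product; this rests on the independence of the $f_p^j$ together with Bertini's theorem, which ensures that the common zero set is almost surely a genuine complete intersection, as recorded in the derivation preceding the corollary.
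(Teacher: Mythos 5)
Your proposal is correct and follows essentially the same route as the paper: the probabilistic Lelong--Poincar\'e factorization $\mathbb{E}[Z_{f_p^1,\dots,f_p^k}]=\mathbb{E}[Z_{f_p^1}]\wedge\dots\wedge\mathbb{E}[Z_{f_p^k}]$, the locally uniform convergence of the (Bergman-kernel) potentials to $V_{Y,\varphi}$ from \cite[Proposition 3.1 and (6.5)]{BL15}, and the Bedford--Taylor convergence theorem \cite[\S 7]{BT2} for Monge--Amp\`ere measures of locally uniformly convergent, locally bounded psh functions. You also correctly identify the one point where a naive argument would fail (discontinuity of the wedge product under mere weak convergence) and why the locally uniform convergence plus continuity of $V_{Y,\varphi}$ rescues it, which is exactly the content the paper relies on.
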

Finally, we consider the asymptotic zero distribution of random polynomial mappings. 
The distribution of zeros of random polynomial mappings was studied in \cite{Sh08} 
in the unweighted case for Gaussian ensembles. 
The key result for the Gaussian ensembles is that the variances of linear statistics 
of zeros are summable and this gives almost sure convergence of random zero currents. 
More recently, the first named author \cite{B6,B7} 
considered the weighted case and for non-Gaussian random coefficients. 
In the latter setting, the variances are asymptotic to zero but they 
are no longer summable (see \cite[Lemma 5.2]{B6}). 
A different approach is developed in \cite{B6,B7} by using super-potentials 
(see \cite{DS11}) in order to obtain the asymptotic zero distribution 
of random polynomial mappings. 
The following result follows from the arguments in \cite[Theorem 1.2]{B6} 
(see also \cite[Theorem 1.2 ]{B7}): 
\begin{Theorem}[{\cite{B6,B7}}]
Under the hypotheses of Theorem \ref{BL2} and for each $1\leq k\leq n$ 
we have almost surely in $\mathcal{H}_k$ that 
$$\lim_{p\to\infty}\left[dd^c\left(\frac1p\log\|F_p\|\right)\right]^k=
(dd^cV_{Y,\varphi})^k\,,$$
in the sense of currents. 
 In particular, for any bounded domain $U\Subset\C^n$ we have 
 almost surely in $\mathcal{H}_n$ that 
 $$p^{-n}\#\{z\in U:f_p^1(z)=\dots=f_p^n(z)=0\}\to \mu_{Y,\varphi}(U)
 \,,\quad p\to\infty.$$
\end{Theorem}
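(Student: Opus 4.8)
The plan is to prove the statement first at the level of potentials and then to transport it through the nonlinear intersection operator $(dd^c\,\cdot\,)^k$ by means of the super-potentials of Dinh--Sibony \cite{DS11}, since weak convergence of potentials alone does not control wedge products. Set $u_p:=\frac1p\log\|F_p\|$. By Theorem \ref{BL2}, for each fixed $j$ one has $\frac1p\log|f_p^j|\to V_{Y,\varphi}$ in $L^1_{loc}(\C^n)$ almost surely; combining this with the two-sided estimate
$$\max_{1\le j\le k}\frac1p\log|f_p^j|\ \le\ u_p\ \le\ \max_{1\le j\le k}\frac1p\log|f_p^j|+\frac{\log k}{2p}$$
and the elementary inequality $|\max_j c_j|\le\max_j|c_j|$, I would first conclude that $u_p\to V_{Y,\varphi}$ in $L^1_{loc}$ almost surely on $\mathcal H_k$, and in particular that each $T_p^j:=\frac1p[Z_{f_p^j}]$ converges weakly to $dd^cV_{Y,\varphi}$ almost surely.

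Next I would compactify $\C^n$ to $\mathbb P^n$ --- the growth condition (\ref{gr}) ensures that the $T_p^j$ and the limit extend to positive closed $(1,1)$-currents in the fixed Fubini--Study class --- and introduce the super-potentials $\mathcal U_{T}$, functions $R\mapsto\mathcal U_T(R)=\langle T-\omega_{\mathrm{FS}},U_R\rangle$ defined on a compact family of positive closed test currents $R$ with potentials $U_R$. The key input from \cite{DS11} is that the intersection $(T^1,\dots,T^k)\mapsto T^1\wedge\cdots\wedge T^k$ is continuous along sequences whose super-potentials converge \emph{uniformly} on such a family; since the $f_p^j$ are independent, it then suffices to prove that almost surely $\mathcal U_{T_p^j}\to\mathcal U_{dd^cV_{Y,\varphi}}$ uniformly over the family, for each $j$. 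The mean already behaves correctly: as the super-potential is affine in the $(1,1)$-current, $\mathbb E[\mathcal U_{T_p^j}(R)]=\mathcal U_{\mathbb E[T_p^j]}(R)$, and $\mathbb E[T_p^j]=\frac1p\mathbb E[Z_{f_p^j}]\to dd^cV_{Y,\varphi}$ by the expectation statement (\ref{exdist}) of Theorem \ref{BL2}.

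The crux, and the step I expect to be hardest, is the uniform convergence of the super-potentials. For a \emph{regular} test current $R$ with continuous potential, $\mathcal U_{T_p^j}(R)\to\mathcal U_{dd^cV_{Y,\varphi}}(R)$ is immediate from the almost sure weak convergence of $T_p^j$ above, so along a countable dense set of such $R$ the convergence holds simultaneously and almost surely; the problem is to upgrade this to uniform convergence over the whole family, which includes singular $R$ against which the $L^1_{loc}$ convergence of $u_p$ says nothing. This is exactly the point at which the variance-summation method used in the Gaussian case breaks down: the variances of the linear statistics tend to zero but, as noted in \cite[Lemma 5.2]{B6}, are only $O(1/p)$ and hence not summable, so Borel--Cantelli is unavailable. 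My plan is instead to exploit the Bergman representation
$$\frac1p\log|f_p^j(z)|=\frac{1}{2p}\log B_p(z)+\frac1p\log\big|\langle a^{p,j},e_p(z)\rangle\big|,\qquad B_p=\sum_i|P_i^p|^2,\ \ e_p=B_p^{-1/2}(P_i^p)_i,$$
where $\frac{1}{2p}\log B_p\to V_{Y,\varphi}$ uniformly by \cite[Proposition 3.1 and (6.5)]{BL15}: the tail bound (\ref{ta}) and the off-diagonal decay of $B_p$ should yield a uniform (in $R$) probabilistic control of the fluctuation term, which combined with the equicontinuity furnished by the H\"older regularity of super-potentials promotes the almost sure pointwise convergence to almost sure uniform convergence on the whole family. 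Feeding this into the Dinh--Sibony continuity theorem gives $[dd^cu_p]^k\to(dd^cV_{Y,\varphi})^k$ almost surely in $\mathcal H_k$.

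The principal obstacle is therefore the uniform control of the super-potentials of the highly singular random currents $T_p^j$ over a family that contains singular test currents: plain $L^1_{loc}$ convergence of the potentials handles only the regular members, and the non-summability of the variances forecloses the elementary route, so one must extract equicontinuity from the Bergman-kernel asymptotics and the tail hypothesis. Granting the main convergence, the ``in particular'' assertion is the case $k=n$: pairing the limiting identity against a test function approximating $\mathbbm 1_U$ and using $\mu_{Y,\varphi}=\frac1{n!}(dd^cV_{Y,\varphi})^n$, the normalized intersection $p^{-n}[Z_{f_p^1,\dots,f_p^n}]$ becomes the counting measure of the common zeros in $U$, which yields the stated almost sure asymptotics for $p^{-n}\#\{z\in U:F_p(z)=0\}$.
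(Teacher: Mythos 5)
Your proposal follows essentially the same route as the proof the paper points to: the theorem is quoted from \cite{B6,B7}, whose argument is exactly the super-potential method of Dinh--Sibony \cite{DS11} that you outline, adopted because (as you correctly note via \cite[Lemma 5.2]{B6}) the variances of the linear statistics tend to zero but are not summable, so the Borel--Cantelli route available in the Gaussian case is closed. The survey gives no further detail beyond naming the Bergman-kernel asymptotics of \cite[Proposition 3.1 and (6.5)]{BL15} and the super-potential continuity of intersections as the key inputs, and your sketch identifies both, including the genuinely hard step (upgrading pointwise to the required convergence of super-potentials against singular test currents), which is precisely where the work in \cite{B6,B7} lies.
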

\subsection{Central limit theorem for linear statistics}\label{S:clt} 
In this subsection, we consider the special case where $Y=\C^n$ and 
$\varphi:\C^n\to\mathbb{R}$ is a $\mathscr{C}^2$ 
weight function satisfying the growth condition \eqref{gr}. 
Throughout this part  we assume that the random coefficients 
$a_j^n$ are independent copies of standard complex normal distribution 
$N_{\mathbb{C}}(0,1)$. We consider the random variables
$$\mathcal{Z}_p^{\Phi}(f_p):= \langle [Z_{f_p}],\Phi\rangle $$ 
where $[Z_{f_p}]$ denotes the current of integration along the zero 
divisor of $f_p$ and $\Phi$ is a real $(n-1,n-1)$ test form. 

The random variables $\mathcal{Z}_{f_p}$ are often called 
\textit{linear statistics} of zeros. 
A form of universality for zeros of random polynomials is the
central limit theorem (CLT) for linear statistics of zeros, that is 
$$\frac{\mathcal{Z}_p^{\Phi}-\mathbb{E} 
\mathcal{Z}_p^{\Phi}}{\sqrt{\operatorname{Var}[\mathcal{Z}_p^{\Phi}]}}$$
converge in distribution to the (real) Gaussian random variable 
$\mathcal{N}(0,1)$ as $p\to \infty$. 
Here, $\mathbb{E}[\mathcal{Z}_p^{\Phi}]$ denotes the expected value and 
$\operatorname{Var}[\mathcal{Z}^{\Phi}_p]$ denotes
 the variance of the random variable $\mathcal{Z}^{\Phi}_p$.

\par In complex dimension one, Sodin and Tsirelson \cite{STr} proved the
asymptotic normality of $Z_p^{\psi}$ for Gaussian analytic functions 
and a $\mathscr{C}^2$ function $\psi$ by using the method of moments 
which is a classical approach in probability theory to prove CLT for Gaussian 
random processes whose variances are asymptotic to zero. 
More precisely, they observed that the asymptotic normality of linear statistics 
reduces to the Bergman kernel asymptotics (\cite[Theorem 2.2]{STr}) 
(see also \cite{NaSo2} for a generalization of this result to the case where 
$\psi$ is merely a bounded function by using a different method). 
On the other hand, Shiffman and Zelditch \cite{SZ3} pursued the idea 
of Sodin and Tsirelson and generalized their result to the setting of 
random holomorphic sections for a positive line bundle $L\to X$ 
defined over a projective manifold. Building upon ideas from \cite{STr,SZ3} 
and using the near and off diagonal Bergman kernel asymptotics 
(see \cite[\S2]{B4}) we proved a CLT for linear statistics:

\begin{Theorem}[{\cite[Theorem 1.2]{B4}}]\label{CLT2}
Let $\varphi:\C^n \to\mathbb{R}$ be a $\mathscr{C}^2$ 
weight function satisfying (\ref{gr}) and $\Phi$ be a real $(n-1, n-1)$ test form 
with $\mathscr{C}^3$ coefficients such that 
$\partial\overline{\partial}\Phi\not\equiv0$ and
$\partial\overline{\partial}\Phi$ is supported 
in the interior of the bulk of $\varphi$ (see \cite[(2.3)]{B4}).
Then the linear statistics
$$\frac{\mathcal{Z}_p^{\Phi}-
\mathbb{E} \mathcal{Z}_p^{\Phi}}{\sqrt{\operatorname{Var}[\mathcal{Z}_p^{\Phi}]}}$$ 
converge in distribution to the (real) Gaussian $\mathcal{N}(0,1)$ as $p\to \infty$.
\end{Theorem}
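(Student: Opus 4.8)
The plan is to follow the Sodin--Tsirelson method of cumulants, adapted to the weighted setting as in \cite{B4}, reducing the whole statement to Bergman kernel asymptotics. First I would apply the Lelong--Poincar\'e formula $[Z_{f_p}]=dd^c\log|f_p|$ and integrate by parts to write
\[
\mathcal{Z}_p^{\Phi}=\langle dd^c\log|f_p|,\Phi\rangle=\int_{\C^n}\log|f_p(z)|\,\psi(z),\qquad \psi:=dd^c\Phi,
\]
where $\psi$ is a real $(n,n)$-form with $\mathscr{C}^1$ coefficients and compact support contained in the bulk. Since the $a_j^p$ are i.i.d.\ $N_{\C}(0,1)$, for each fixed $z$ the value $f_p(z)$ is a centered complex Gaussian of variance $K_p(z):=\sum_{j}|P_j^p(z)|^2$, the Bergman kernel on the diagonal. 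Writing $f_p(z)=\sqrt{K_p(z)}\,g_p(z)$ exhibits $g_p$ as a Gaussian field of unit pointwise variance whose covariance is the normalized kernel $\gamma_p(z,w)=K_p(z,w)/\sqrt{K_p(z)K_p(w)}$; the deterministic term $\tfrac12\log K_p$ cancels upon centering, so that
\[
\mathcal{Z}_p^{\Phi}-\mathbb{E}\mathcal{Z}_p^{\Phi}=\int_{\C^n}\xi_p(z)\,\psi(z),\qquad \xi_p(z):=\log|g_p(z)|-\mathbb{E}\log|g_p(z)|.
\]

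Second, I would invoke the method of cumulants: to obtain convergence to $\mathcal{N}(0,1)$ it suffices to show that $\sigma_p^2:=\operatorname{Var}(\mathcal{Z}_p^{\Phi})$ stays of its natural order with a positive leading coefficient, and that the normalized cumulants $C_m/\sigma_p^m$ of order $m\ge 3$ tend to $0$. The analytic input here is the classical expansion, for a jointly complex Gaussian pair of unit variance,
\[
\operatorname{Cov}(\log|g_p(z)|,\log|g_p(w)|)=\frac14\sum_{k\ge1}\frac{\rho_p(z,w)^k}{k^2},\qquad \rho_p(z,w):=|\gamma_p(z,w)|^2,
\]
together with the diagrammatic bound of Sodin--Tsirelson expressing the $m$-th joint cumulant of $\log|g_p(z_1)|,\dots,\log|g_p(z_m)|$ as a sum over connected graphs on $m$ vertices of products, over the edges, of the off-diagonal factor $\rho_p^{1/2}(z_i,z_j)=|\gamma_p(z_i,z_j)|$.

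Third, I would feed in the near- and off-diagonal Bergman kernel asymptotics from \cite[\S2]{B4}. The near-diagonal (Bargmann--Fock) scaling limit gives, from the $k=1$ term above, $\sigma_p^2\sim c_\Phi\,p^{-n}$ with $c_\Phi>0$ precisely because $\partial\db\Phi\not\equiv0$ and $\supp\partial\db\Phi$ lies in the interior of the bulk, where the equilibrium density is positive. The off-diagonal decay of $K_p(z,w)$ on the bulk forces $\rho_p^{1/2}$ to concentrate in a $p^{-1/2}$-neighborhood of the diagonal, so that integrating one edge-factor against $\psi$ costs a factor of order $p^{-n}$. A connected graph on $m$ vertices carries at least $m-1$ edges, so integration against $\psi^{\otimes m}$ yields $|C_m|\lesssim p^{-n(m-1)}$; combined with $\sigma_p^m\asymp p^{-nm/2}$ this gives
\[
\frac{|C_m|}{\sigma_p^m}\lesssim p^{-n(m-2)/2}\longrightarrow 0\qquad(m\ge3),
\]
which is exactly the cumulant decay required for the CLT.

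Finally, the main obstacle lies in the third step: establishing the Sodin--Tsirelson diagrammatic bound for the cumulants of $\log|g_p|$ and, more delicately, making the off-diagonal Bergman estimates quantitative and uniform over the compact support of $\psi$, so that every connected diagram really integrates to order $p^{-n(m-1)}$ while the variance is held at the exact order $p^{-n}$. Tracking the non-degeneracy of $\sigma_p^2$ --- guaranteed by the hypothesis that $\supp\partial\db\Phi$ sits in the interior of the bulk --- is essential, since it is what keeps the normalization from collapsing.
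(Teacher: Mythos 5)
Your proposal follows essentially the same route as the paper: the survey gives no proof of Theorem \ref{CLT2} but attributes it to \cite[Theorem 1.2]{B4}, describing the method precisely as building on the Sodin--Tsirelson and Shiffman--Zelditch reduction of asymptotic normality of linear statistics to near- and off-diagonal Bergman kernel asymptotics, which is exactly the cumulant/diagram scheme you outline. The only caveat is that the exact power of $p$ you assign to $\operatorname{Var}[\mathcal{Z}_p^{\Phi}]$ depends on normalization conventions and should be checked against \cite{B4}, but this does not affect the structure of the argument, since only the decay of the normalized cumulants $C_m/\sigma_p^m$ matters.
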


\subsection{Expected number of real zeros} In this part, we consider 
random univariate polynomials of the form
\[f_p(z)=\sum_{j=0}^pa^p_jc^p_jz^j\] 
where $c^p_j$ are deterministic constants 
and $a^p_j$ are real i.i.d.\ random variables of mean zero and variance one. 
We denote the number of real zeros of $f_p$ by $N_p.$ 
Therefore $N_p:\mathcal{P}_p\to \{0,\dots,p\}$ defines a random variable. 

The study of the number of real roots for Kac polynomials 
(i.\,e.\ $c^p_j=1$ and $a_j^p$ are i.i.d.\ real Gaussian) 
goes back to Bloch and P\'olya \cite{BPo} where they considered the 
case when the random variable $a_j^p$ is the uniform 
distribution on the set 
$\{-1,0,1\}$. This problem was also considered by 
Littlewood and Offord in a series of papers \cite{LO, LO1,LO2} 
for real Gaussian, Bernoulli and uniform distributions. 
In \cite{Kac} Kac established a remarkable formula for the expected 
number of real zeros of Gaussian random polynomials 
\begin{equation}\label{kac1}
\mathbb{E}[N_p]=\frac{4}{\pi}\int_0^1\frac{\sqrt{A(x)C(x)-B^2(x)}}{A(x)}\,dx\,,
\end{equation}
where $$A(x)=\sum_{j=0}^px^{2j},\ B(x)=
\sum_{j=1}^pjx^{2j-1},\ C(x)=\sum_{j=1}^pj^2x^{2j-2}\,,$$
and (\ref{kac1}) in turn implies that
\begin{equation}\label{kac2}
\mathbb{E}[N_p]=\big(\frac{2}{\pi}+o(1)\big)\log p.
\end{equation}
Later, Erd\H{o}s and Turan \cite{ET} obtained more precise estimates. 
The result stated in (\ref{kac2}) was also generalized to Bernoulli distributions 
by Erd\H{o}s and Offord \cite{EO} as well as to distributions in the domain of 
attraction of the normal law by Ibragimov and Maslova \cite{IM1,IM2}.

On the other hand, for models other than Kac ensembles, 
the behavior of $N_p$ changes considerably. In \cite{EdK},  
Edelman and Kostlan gave a beautiful geometric argument in order to 
calculate the expected number of real roots of Gaussian random polynomials. 
The argument in \cite{EdK} applies in the quite general setting of random sums of the form 
$$f_p(z)=\sum_{j=0}^pa_jP^p_j(z)\,,$$ where $P^p_j$ are entire functions 
that take real values on the real line (see also \cite{vanderbei, LPX} 
and references therein for a recent treatment of this problem). 
In particular, Edelman and Kostlan \cite[\S 3]{EdK} proved that 
 \[\mathbb{E}[N_p]=
 \begin{cases}  \sqrt{p} & 
 \text{for elliptic polynomials, i.e.\ when $c^p_j=
 \sqrt{\binom{p}{j}}$}\,, \\[6pt] 
\Big(\dfrac{2}{\pi}+o(1)\Big)\sqrt{p}  & \text{for Weyl polynomials, 
i.e.\ when $c^p_j=\sqrt{\frac{1}{j!}}\,\cdot$} 
\end{cases} \]

More recently, Tao and Vu \cite{TaoVu2} 
established some local universality results concerning the correlation
functions of the zeroes of random polynomials of the form (\ref{rp}). 
In particular, the results of \cite{TaoVu2} generalized the aforementioned ones 
for real Gaussians to the setting where $a_j^p$ is a random variable satisfying 
the moment condition $\mathbb{E}|\zeta|^{2+\delta}<\infty$ for some $\delta>0.$  

We remark that all the three models described above 
(Kac, elliptic and Weyl polynomials) arise in the context of orthogonal polynomials. 
In this direction, the expected distribution of real zeros for random linear combination 
of Legendre polynomials is studied by Das \cite{Das71} in which he proved that 
$$\mathbb{E}[N_p]=\frac{p}{\sqrt{3}}+o(p).$$ 

In terms of our model described in \S\ref{random} 
we have the following result on real zeros of random polynomials:

\begin{Theorem}[{\cite[Theorem 1.1]{B8}}]
Let $\varphi:\mathbb{C}\to \mathbb{R}$ be a non-negative 
radially symmetric weight function of class 
$\mathscr{C}^2$ satisfying (\ref{gr}) and $a_j^p$ be i.i.d.\ non-degenerate 
real random variables of mean zero and variance one satisfying 
$\mathbb{E}|a_j^p|^{2+\delta}<\infty$ for some $\delta>0.$ 
Then the expected number of real zeros of the random polynomials $f_{p}(z)$ satisfies  
\begin{equation}\label{azeros}
\lim_{n\to \infty}\frac{1}{\sqrt{p}}\,\mathbb{E}[N_p^{\zeta}]=
\frac{1}{\pi}\int_{S_{\varphi}\cap \mathbb{R}}\sqrt{\frac12\Delta\varphi(x)}\,dx\,.
\end{equation}
\end{Theorem}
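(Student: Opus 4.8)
The plan is to combine the Edelman--Kostlan/Kac--Rice formula for Gaussian ensembles with near- and off-diagonal Bergman kernel asymptotics, and then to remove the Gaussian hypothesis by universality. Since $\varphi$ is radially symmetric, the monomials are orthogonal for $(\cdot,\cdot)_p$, so the orthonormal basis has the form $P_j^p(z)=c_j^p z^j$ with $(c_j^p)^{-2}=\int_{\mathbb C}|z|^{2j}e^{-2p\varphi}\,d\nu$, and the covariance kernel of the real field $f_p(x)=\sum_j a_j^p c_j^p x^j$ at real arguments is exactly the (unweighted) Bergman kernel $K_p(x,y)=\sum_j (c_j^p)^2 x^j y^j$ of $(\C_p[z],(\cdot,\cdot)_p)$. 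I would first settle the case where the $a_j^p$ are standard real Gaussians, and afterwards transfer the leading asymptotics to the general case.

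In the Gaussian case the Edelman--Kostlan formula gives the density of real zeros of $f_p$ as
\[
\rho_p(x)=\frac1\pi\sqrt{\partial_x\partial_y\log K_p(x,y)\big|_{y=x}}\,,
\]
so that $\mathbb E[N_p]=\int_{\mathbb R}\rho_p(x)\,dx$. A convenient observation is that the weight is irrelevant here: since $\partial_x\partial_y$ annihilates both $p\varphi(x)$ and $p\varphi(y)$, one may replace $K_p$ by the weighted Bergman kernel $\widetilde K_p(x,y)=K_p(x,y)e^{-p\varphi(x)-p\varphi(y)}$, which is the object with a universal scaling limit. The whole problem is thereby reduced to the behaviour of $\partial_x\partial_y\log\widetilde K_p|_{y=x}$.

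Second, I would invoke the near-diagonal Bergman kernel asymptotics of \cite{B4}. On the interior of the bulk $S_\varphi$ one has $V_{Y,\varphi}=\varphi$, and zooming at scale $1/\sqrt p$ around $x_0\in\inte S_\varphi$ the kernel $\widetilde K_p$ converges to the model kernel governed by the curvature $\tfrac12\Delta\varphi(x_0)$; explicitly, uniformly on compact subsets of $\inte S_\varphi\cap\mathbb R$,
\[
\frac1p\,\partial_x\partial_y\log\widetilde K_p(x,y)\big|_{y=x}\longrightarrow \tfrac12\Delta\varphi(x)\,,
\]
so that $\tfrac1{\sqrt p}\rho_p(x)\to\tfrac1\pi\sqrt{\tfrac12\Delta\varphi(x)}$ pointwise in the bulk. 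Off the bulk, where $V_{Y,\varphi}>\varphi$, the off-diagonal estimates make $\widetilde K_p$ exponentially subordinate, so the real-zero density there integrates to $o(\sqrt p)$; together with the growth condition \eqref{gr} controlling the far field, this provides the uniform integrability needed to pass to the limit under the integral and obtain \eqref{azeros} in the Gaussian case, the edge region of width $O(1/\sqrt p)$ contributing only $O(1)$.

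Finally, to cover general non-degenerate $a_j^p$ with $\mathbb E|a_j^p|^{2+\delta}<\infty$, I would use the local universality of Tao--Vu \cite{TaoVu2}: the rescaled field around each $x_0\in S_\varphi$ converges to the same Gaussian analytic function as in the Gaussian model once the first two moments are fixed to $0$ and $1$, and the one-point intensity of real zeros is asymptotically insensitive to the law of the coefficients. Re-integrating the transferred bulk intensity yields the same limit, while the non-degeneracy hypothesis guarantees that the Kac--Rice density is well defined. The main obstacle is precisely making this transfer global and uniform: universality is a local statement about correlation functions, so one must control the intensities uniformly up to the edge of $S_\varphi$ and verify that neither the edge region nor the exterior produces a contribution of order $\sqrt p$; the $(2+\delta)$-moment assumption is exactly what supplies the tail bounds and the uniform integrability required to turn the pointwise bulk limit into the stated asymptotics for $\mathbb E[N_p]$.
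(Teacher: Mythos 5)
Your proposal is correct and follows essentially the same route as the cited proof in [B8], which the survey only states without reproducing: the Edelman--Kostlan/Kac--Rice formula for the Gaussian case, near- and off-diagonal weighted Bergman kernel asymptotics (as in \cite[\S 2]{B4}) to get the bulk density $\tfrac1\pi\sqrt{\tfrac12\Delta\varphi}$ and to kill the exterior and edge contributions, and Tao--Vu local universality to remove the Gaussian hypothesis under the $(2+\delta)$-moment condition. No substantive divergence from the source argument.
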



\section{Equidistribution for zeros of random holomorphic sections}\label{S:equidist}

We start by recalling a very general result about 
the equidistribution of zeros of random holomorphic sections 
of sequences of line bundles on an analytic space 
\cite[Theorem 1.1]{BCM}. Following \cite{CMM,BCM}, 
we consider the following setting:

\smallskip

(A1) $(X,\omega)$ is a compact (reduced) normal K\"ahler space 
of pure dimension $n$, $X_{\rm reg}$ denotes the set of 
regular points of $X$, and $X_{\rm sing}$ denotes the set 
of singular points of $X$.

\smallskip

(A2) $(L_p,h_p)$, $p\geq1$, is a sequence of holomorphic line 
bundles on $X$ with singular Hermitian metrics $h_p$ whose 
curvature currents verify 
\begin{equation}\label{e:pc}
c_1(L_p,h_p)\geq a_p\,\omega \, \text{ on $X$, where 
$a_p>0$ and } \lim_{p\to\infty}a_p=\infty.
\end{equation}
Let $A_p:=\int_Xc_1(L_p,h_p)\wedge\omega^{n-1}$. 
If $X_{\rm sing}\neq\emptyset$ we also assume that 
\begin{equation}\label{e:domin0}
\exists\,T_0\in\cT(X) \text{ such that } 
c_1(L_p,h_p)\leq A_pT_0\,,\;\forall\,p\geq1\,.
\end{equation}

\par Here $\cT(X)$ denotes the space of positive closed 
currents of bidegree $(1,1)$ on $X$ 
with local plurisubharmonic (psh) potentials. We refer to \cite[Section 2.1]{CMM} 
for the definition and main properties of psh 
functions and currents on analytic spaces, and to \cite[Section 2.2]{CMM} 
for the notion of singular Hermitian holomorphic line bundles on analytic spaces. 

We let $H^0_{(2)}(X,L_p)$ be the Bergman space 
of $L^2$-holomorphic sections of $L_p$ relative to the metric 
$h_p$ and the volume form $\omega^n/n!$ on $X$, 
\begin{equation}\label{e:bs}
H^0_{(2)}(X,L_p)=\left\{S\in H^0(X,L_p):\,
\|S\|_p^2:=\int_{X_{\rm reg}}|S|^2_{h_p}\,\frac{\omega^n}{n!}
<\infty\right\}\,,
\end{equation}
endowed with the obvious inner product.  
For $p\geq1$, let $d_p=\dim H^0_{(2)}(X,L_p)$ and let $S_1^p,\dots,S_{d_p}^p$ 
be an orthonormal 
basis of $H^0_{(2)}(X,L_p)$.  Note that the space $H^0(X,L)$ of 
holomorphic sections of a holomorphic line bundle $L$ on a compact 
analytic space $X$ is finite dimensional (see e.g.\ \cite[Th\'eor\`eme 1,\,p.27]{An:63}). 

\par By using the above orthonormal bases, we identify the spaces 
$\hp\simeq \C^{d_p}$ and we endow them with probability measures 
$\sigma_p$ verifying the moment condition: 

\smallskip

(B) There exist a constant $\nu\geq1$ and for every $p\geq1$ 
constants $C_p>0$ such that  
\[\int_{\C^{d_p}}\big|\log|\langle a,u\rangle|\,\big|^\nu\,d\sigma_p(a)\leq 
C_p\,,\,\text{ $\forall\,u\in\C^{d_p}$ with $\|u\|=1$}\,.\] 

\smallskip

Given a section $s\in H^0_{(2)}(X,L_p)$ we denote by $[s=0]$ 
the current of integration over the zero divisor of $s$.
We recall the Lelong-Poincar\'e formula (see e.g.\ \cite[Theorem 2.3.3]{MM07})
\begin{equation}\label{e:LP}
[s=0]=c_1(L_p,h_p)+dd^c\log|s|_{h_p}\,,
\end{equation}
where $d=\partial+\overline\partial$ and $d^c=
\frac{1}{2\pi i}(\partial-\overline\partial)$. 

The expectation current $\E[s_p=0]$ of the current-valued random variable 
$H^0_{(2)}(X,L_p)\ni s_p\mapsto[s_p=0]$ is defined by
$$\big\langle \E[s_p=0],\Phi\big\rangle=
\int\limits_{H^0_{(2)}(X,L_p)}\big\langle[s_p=0],\Phi\big\rangle\,d\sigma_p(s_p),$$
where $\Phi$ is a $(n-1,n-1)$ test form on $X$. 
We consider the product probability space
\[(\mathcal{H},\sigma)=
\left(\prod_{p=1}^\infty H^0_{(2)}(X,L_p),\prod_{p=1}^\infty\sigma_p\right). 
\]
The following result gives the distribution
of the  zeros of random
sequences of holomorphic sections of $L_p$, as well as the convergence
in $L^1$ of the logarithms of their pointwise norms. 
Note that the Lelong-Poincar\'e formula \eqref{e:LP} shows that the $L^1$ convergence of 
the logarithms of the pointwise norms of sections implies the weak convergence of their zero-currents. 

\begin{Theorem}[{\cite[Theorem 1.1]{BCM}}]\label{th1}
Assume that $(X,\omega)$, $(L_p,h_p)$ and $\sigma_p$ 
verify the assumptions (A1), (A2) and (B). 
Then the following hold:

\smallskip
\noindent
(i) If $\displaystyle\lim_{p\to\infty}C_pA_p^{-\nu}=0$ 
then $\displaystyle\frac{1}{A_p}\big(\E[s_p=0]-c_1(L_p,h_p)\big)\to 0$\,, 
as $p\to \infty$, in the weak sense of currents on $X$. 

\smallskip
\noindent
(ii) If $\displaystyle\liminf_{p\to\infty}C_pA_p^{-\nu}=0$ 
then there exists a sequence of natural numbers $p_j\nearrow\infty$ 
such that for $\sigma$-a.\,e.\ sequence $\{s_p\}\in\mathcal{H}$ we have
\begin{equation}\label{e:ss}
\frac{1}{A_{p_j}}\log|s_{p_j}|_{h_{p_j}}\to0\,,\,\;
\frac{1}{A_{p_j}}\big([s_{p_j}=0]-c_1(L_{p_j},h_{p_j})\big) \to0\,,\,
\text{ as $j\to\infty$,}
\end{equation}
in $L^1(X,\omega^n)$, respectively in the weak sense of currents on $X$.

\smallskip
\noindent
(iii) If $\displaystyle\sum_{p=1}^{\infty}C_pA_p^{-\nu}<\infty$ 
then for $\sigma$-a.\,e.\ sequence
$\{s_p\}\in\mathcal{H}$ we have 
\begin{equation}\label{e:cp}
\frac{1}{A_p}\log|s_p|_{h_p}\to0\,,\,\;
\frac{1}{A_p}\big([s_p=0]-c_1(L_p,h_p)\big) \to0\,,\,\text{ as $p\to\infty$,}
\end{equation}
in $L^1(X,\omega^n)$, respectively in the weak sense of currents on $X$.
\end{Theorem}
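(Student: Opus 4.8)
The plan is to reduce all three statements, via the Lelong--Poincar\'e formula \eqref{e:LP}, to the single scalar quantity $\frac{1}{A_p}\log|s_p|_{h_p}$ and to split it into a deterministic and a random piece. Writing a random section as $s_p=\sum_{j=1}^{d_p}a_j S_j^p$ with $a=(a_j)\in\C^{d_p}$ distributed according to $\sigma_p$, and introducing the Bergman kernel function $P_p(x)=\sum_{j=1}^{d_p}|S_j^p(x)|^2_{h_p}$, one has at every point $x$ with $P_p(x)>0$ the pointwise identity
$$\log|s_p(x)|_{h_p}=\tfrac12\log P_p(x)+\log|\langle a,u_p(x)\rangle|,$$
where $u_p(x)\in\C^{d_p}$ is the unit vector obtained by normalizing the conjugated evaluation vector $(S_j^p(x))_j$ in a local frame of $L_p$. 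Combined with \eqref{e:LP} this yields
$$\frac{1}{A_p}\big([s_p=0]-c_1(L_p,h_p)\big)=\frac{1}{2A_p}\,dd^c\log P_p+\frac{1}{A_p}\,dd^c\log|\langle a,u_p(\cdot)\rangle|,$$
so it suffices to show that $\frac{1}{2A_p}\log P_p$ (deterministic) and $\frac{1}{A_p}\log|\langle a,u_p(\cdot)\rangle|$ (random) tend to $0$ in $L^1(X,\omega^n)$ in the appropriate probabilistic sense. Applying $dd^c$, which is continuous from $L^1$ to currents, then delivers the three assertions.

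The deterministic term is the heart of the matter and is where the geometry of the singular K\"ahler space enters. Here I would invoke the Bergman kernel estimates of \cite{CMM}: the curvature hypothesis \eqref{e:pc} and the sub-mean value inequality give a uniform upper bound for $P_p$, while $L^2$-extension/H\"ormander-type estimates, together with the domination condition \eqref{e:domin0} to control the behaviour along $X_{\rm sing}$, provide a matching lower bound off a small exceptional set. These combine to give the unconditional convergence $\frac{1}{A_p}\log P_p\to 0$ in $L^1(X,\omega^n)$, independently of the probabilistic hypotheses. I expect this to be the \emph{main obstacle}, precisely because of the analytic subtleties near the singular locus and the requirement that the estimates be uniform in $p$.

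The random term is controlled entirely by the moment condition (B), with the exponent $\nu$ tuned exactly for the Borel--Cantelli bookkeeping. Set $g_p:=\frac{1}{A_p}\int_X\big|\log|\langle a,u_p(x)\rangle|\,\big|\,\omega^n$. For the expectation statement (i), Fubini together with Jensen's inequality for the concave map $t\mapsto t^{1/\nu}$ gives $\E\big|\log|\langle a,u\rangle|\,\big|\le C_p^{1/\nu}$ for every unit vector $u$, whence $\E[g_p]\le \mathrm{Vol}(X)\,(C_pA_p^{-\nu})^{1/\nu}\to0$ under $\lim_p C_pA_p^{-\nu}=0$; pairing with a test form then yields $\frac{1}{A_p}\big(\E[s_p=0]-c_1(L_p,h_p)\big)\to0$. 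For the almost sure statements I would instead estimate the full $\nu$-th moment: applying Jensen in the space variable against the probability measure $\omega^n/\mathrm{Vol}(X)$ and then Fubini and (B) gives
$$\E[g_p^{\nu}]\le \mathrm{Vol}(X)^{\nu}\,C_pA_p^{-\nu},$$
so that Markov's inequality provides the tail bound $\sigma_p(g_p>\varepsilon)\le \mathrm{Vol}(X)^\nu\varepsilon^{-\nu}\,C_pA_p^{-\nu}$ for every $\varepsilon>0$.

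Finally I would assemble the three cases from this tail bound. Under the summability hypothesis of (iii) one has $\sum_p\sigma_p(g_p>\varepsilon)<\infty$, so Borel--Cantelli on the product space $(\mathcal H,\sigma)$ forces $g_p\to0$ for $\sigma$-a.e.\ sequence; combined with the deterministic convergence this gives \eqref{e:cp}. For (ii), the condition $\liminf_p C_pA_p^{-\nu}=0$ lets me extract a subsequence $p_j\nearrow\infty$ with $\sum_j C_{p_j}A_{p_j}^{-\nu}<\infty$, and the same Borel--Cantelli argument along $p_j$ produces \eqref{e:ss}. In each case the resulting convergence $\frac{1}{A_p}\log|s_p|_{h_p}\to0$ in $L^1(X,\omega^n)$, followed by continuity of $dd^c$ and the Lelong--Poincar\'e formula, upgrades to the stated weak convergence of the normalized zero currents. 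The only remaining points needing care are the measurability of $x\mapsto u_p(x)$ (so that Fubini applies) and the fact that $\{P_p=0\}$ lies in a proper analytic subset of $\omega^n$-measure zero, so that $g_p$ and the decomposition above are well defined almost everywhere.
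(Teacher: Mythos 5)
Your proposal is correct and follows essentially the same route as the paper: the splitting $\log|s_p|_{h_p}=\tfrac12\log P_p+\log|\langle a,u_p(\cdot)\rangle|$, with the deterministic part handled by the $L^1$ Bergman kernel asymptotics of \cite[Theorem 1.1]{CMM} (which the paper explicitly identifies as the key ingredient) and the random part controlled by condition (B) via Jensen, Markov and Borel--Cantelli, is exactly the argument of \cite{BCM}. Your moment bounds $\E[g_p]\le \mathrm{Vol}(X)\,(C_pA_p^{-\nu})^{1/\nu}$ and $\E[g_p^{\nu}]\le \mathrm{Vol}(X)^{\nu}C_pA_p^{-\nu}$ are the right quantitative inputs for parts (i)--(iii), and the points you flag (measurability of $u_p$, negligibility of $\{P_p=0\}$) are indeed the only remaining routine verifications.
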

The key ingredient in the proof is a result about the asymptotic behavior
of the Bergman kernel of the space $H^0_{(2)}(X,L_p)$ defined in \eqref{e:bs}, see
\cite[Theorem 1.1]{CMM}.

\smallskip

Note that by \eqref{e:pc}, $A_p\geq a_p\int_X\omega^n$, 
hence $A_p\to\infty$ as $p\to\infty$. So if the measures $\sigma_p$ 
verify (B) with constants $C_p=\Gamma_\nu$ independent of $p$ then 
the hypothesis of $(i)$, $\lim_{p\to\infty}\Gamma_\nu A_p^{-\nu}=0$, 
is automatically verified. Moreover, the hypothesis of $(iii)$ becomes 
$\sum_{p=1}^{\infty}A_p^{-\nu}<\infty$.

\smallskip

General classes of measures $\sigma_p$ that satisfy condition (B) 
were given in \cite[Section 4.2]{BCM}. 
They include the Gaussians and the Fubini-Study volumes on 
${\mathbb C}^{d_p}$, which verify (B) for every $\nu\geq1$ 
with a constant $C_p=\Gamma_\nu$ independent of $p$. 
In Section \ref{S:mtrs}, we provide further examples of measures 
$\sigma_p$ that satisfy condition (B) and have support in totally 
real subsets of ${\mathbb C}^{d_p}$.

\medskip

We recall next several important special cases of Theorem \ref{th1}, 
as given in \cite{BCM}. Let $(L,h)$ be a fixed singular Hermitian holomorphic 
line bundle on $(X,\omega)$, and let $(L_p,h_p)=(L^p,h^p)$, 
where $L^p:=L^{\otimes p}$ and $h^p:=h^{\otimes p}$ 
is the singular Hermitian metric induced by $h$. In this case 
hypothesis \eqref{e:domin0} is automatically verified since 
$c_1(L^p,h^p)=p\,c_1(L,h)$. We have:

\begin{Corollary}[{\cite[Corollary 1.3]{BCM}}]\label{C:Lp1}
Let $(X,\omega)$ be a compact normal K\"ahler space and $(L,h)$ 
be a singular Hermitian holomorphic line bundle on $X$ such that 
$c_1(L,h)\geq\varepsilon\omega$ for some $\varepsilon>0$. 
For $p\geq1$ let $\sigma_p$ be probability measures on $H^0_{(2)}(X,L^p)$
satisfying condition (B).
Then the following hold:

\smallskip
\noindent
(i) If $\displaystyle\lim_{p\to\infty}C_p\,p^{-\nu}=0$ 
then $\displaystyle\frac{1}{p}\,\E[s_p=0]\to c_1(L,h)$\,, 
as $p\to \infty$, weakly on $X$. 

\smallskip
\noindent
(ii) If $\displaystyle\liminf_{p\to\infty}C_p\,p^{-\nu}=0$ 
then there exists a sequence of natural numbers $p_j\nearrow\infty$ 
such that for $\sigma$-a.\,e.\ sequence $\{s_p\}\in\mathcal{H}$ we have
as $j\to\infty$,
\[\frac{1}{p_j}\,\log|s_{p_j}|_{h^{p_j}}\to0
\,\;\text{in $L^1(X,\omega^n)$} \,,\:\:
\frac{1}{p_j}\,[s_{p_j}=0]\to c_1(L,h)\,,\,
\text{weakly on $X$}.\]

\smallskip
\noindent
(iii) If $\displaystyle\sum_{p=1}^{\infty}C_p\,p^{-\nu}<\infty$ 
then for $\sigma$-a.\,e.\ sequence
$\{s_p\}\in\mathcal{H}$ we have as $p\to\infty$,
\[\frac{1}{p}\,\log|s_p|_{h^p}\to0
\,\;\text{in $L^1(X,\omega^n)$} \,,\:\:
\frac{1}{p}\,[s_p=0] \to c_1(L,h)\,,\,
\text{weakly on $X$}.\]
\end{Corollary}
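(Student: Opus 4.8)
The plan is to derive Corollary~\ref{C:Lp1} as a direct specialization of Theorem~\ref{th1} to the case $(L_p,h_p)=(L^p,h^p)$. First I would verify that the three hypotheses (A1), (A2), (B) of the theorem hold in this setting. Hypothesis (A1) is assumed verbatim. For (A2), I observe that since $c_1(L,h)\geq\varepsilon\omega$ and $c_1(L^p,h^p)=p\,c_1(L,h)$ by additivity of curvature under tensor powers, we obtain $c_1(L^p,h^p)\geq p\varepsilon\,\omega$; thus \eqref{e:pc} holds with $a_p=p\varepsilon\to\infty$. The domination condition \eqref{e:domin0}, needed only when $X_{\rm sing}\neq\emptyset$, is automatic here: since $c_1(L^p,h^p)=p\,c_1(L,h)$, one may take $T_0$ to be a suitable fixed positive closed current with local psh potentials dominating $c_1(L,h)$, so that $c_1(L^p,h^p)=p\,c_1(L,h)\leq A_p T_0$ after normalizing (as remarked in the excerpt, this case requires no extra assumption). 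Hypothesis (B) is assumed directly on the measures $\sigma_p$.

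Next I would compute the normalizing constant $A_p$ explicitly. By definition $A_p=\int_X c_1(L^p,h^p)\wedge\omega^{n-1}=p\int_X c_1(L,h)\wedge\omega^{n-1}=p\,A_1$, where $A_1=\int_X c_1(L,h)\wedge\omega^{n-1}>0$ is a fixed positive constant. Therefore $A_p^{-\nu}=A_1^{-\nu}p^{-\nu}$, and the three regimes in Theorem~\ref{th1} translate term-by-term: the condition $\lim_{p\to\infty}C_p A_p^{-\nu}=0$ becomes $\lim_{p\to\infty}C_p\,p^{-\nu}=0$ (the constant $A_1^{-\nu}$ being harmless), and likewise $\liminf C_p A_p^{-\nu}=0 \Leftrightarrow \liminf C_p\,p^{-\nu}=0$ and $\sum C_p A_p^{-\nu}<\infty \Leftrightarrow \sum C_p\,p^{-\nu}<\infty$. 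These are exactly the hypotheses of parts (i), (ii), (iii) of the corollary.

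Finally I would rewrite the conclusions. Since $c_1(L^p,h^p)=p\,c_1(L,h)$, dividing the conclusion of Theorem~\ref{th1}(i) by $A_p=pA_1$ and using $\frac{1}{A_p}c_1(L^p,h^p)=\frac{1}{A_1}c_1(L,h)$ gives, after clearing the constant $A_1$, that $\frac1p\,\E[s_p=0]\to c_1(L,h)$ weakly; the scaling $1/A_p$ versus $1/p$ differs only by the fixed factor $A_1$, which does not affect convergence statements once both sides carry it, so I would track this bookkeeping carefully. The same substitution converts the $L^1$ statements $\frac{1}{A_p}\log|s_p|_{h^p}\to0$ into $\frac1p\log|s_p|_{h^p}\to0$ (again up to the constant $A_1$), and the zero-current statements $\frac{1}{A_p}([s_p=0]-c_1(L^p,h^p))\to0$ into $\frac1p[s_p=0]\to c_1(L,h)$, for parts (ii) and (iii) respectively.

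This corollary is essentially a translation exercise rather than a genuinely new proof, so there is no serious analytic obstacle. The only point demanding care is the constant bookkeeping: the theorem normalizes by $A_p$ whereas the corollary normalizes by $p$, and these differ by the fixed factor $A_1=\int_X c_1(L,h)\wedge\omega^{n-1}$. I would make sure that this factor is consistently absorbed on both sides of each convergence statement, and that the hypotheses $C_p\,p^{-\nu}\to0$ (etc.) are genuinely equivalent to the theorem's $C_p A_p^{-\nu}\to0$ after pulling out $A_1^{-\nu}$. The one conceptual step worth flagging explicitly is the automatic verification of the domination hypothesis \eqref{e:domin0} in the power case, which is what allows the singular setting $X_{\rm sing}\neq\emptyset$ to be handled without any additional assumption beyond $c_1(L,h)\geq\varepsilon\omega$.
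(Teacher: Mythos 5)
Your proposal is correct and follows exactly the route the paper indicates: specialize Theorem \ref{th1} to $(L_p,h_p)=(L^p,h^p)$, note $c_1(L^p,h^p)=p\,c_1(L,h)\geq p\varepsilon\,\omega$ so that \eqref{e:pc} holds with $a_p=p\varepsilon$ and \eqref{e:domin0} is automatic (with $T_0=c_1(L,h)/A_1$), and use $A_p=pA_1$ with $A_1=\int_Xc_1(L,h)\wedge\omega^{n-1}>0$ to translate both the hypotheses and the conclusions, the fixed factor $A_1$ being harmless in every convergence statement. No gaps.
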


In a series of papers starting with \cite{ShZ99}, 
Shiffman and Zelditch consider the case when $(L,h)$ 
is a positive line bundle on a projective manifold $(X,\omega)$ 
and $\omega=c_1(L,h)$. One says in this case that $(X,\omega)$ 
is polarized by $(L,h)$ and since the Hermitian metric $h$ 
is smooth we have that $H^0_{(2)}(X,L^p)=H^0(X,L^p)$ 
is the space of global holomorphic sections of $L^p$. 
In \cite{ShZ99}, Shiffman and Zelditch were the first to study 
the asymptotic distribution of zeros of random sequences of 
holomorphic sections in this setting, in the case when the probability 
measure is the Gaussian or the normalized area measure on the unit sphere 
of $H^0(X,L^p)$. Their results were generalized later in the setting of 
projective manifolds with big line bundles endowed with singular Hermitian 
metrics whose curvature is a K\"ahler current in \cite{CM11,CM13,CM13b}, 
and to the setting of line bundles over compact normal K\"ahler spaces in 
\cite{CMM} and in Corollary \ref{C:Lp1}. Analogous equidistribution 
results for non-Gaussian ensembles are proved in \cite{DS06,B6,B7,BL15}.


Theorem \ref{th1} allows one to handle the case of singular Hermitian 
holomorphic line bundles $(L,h)$ with positive curvature current 
$c_1(L,h)\geq0$ which is not a K\"ahler current (i.e.\ \eqref{e:pc} does not hold). 
Let $(X,\omega)$ be a K\"ahler manifold with a positive line bundle $(L,h_0)$, 
where $h_0$ is a smooth Hermitian metric $h_0$ such that $c_1(L,h_0)=\omega$. 
The set of singular Hermitian metrics $h$ on $L$ with $c_1(L,h)\geq0$ is in 
one-to-one correspondence to the set $PSH(X,\omega)$ of $\omega$-plurisubharmonic 
($\omega$-psh) functions on $X$, by associating to 
$\psi\in PSH(X,\omega)$ the metric $h_\psi=h_0e^{-2\psi}$ 
(see e.g., \cite{D90,GZ05}). 
Note that $c_1(L,h_\psi)=\omega+dd^c\psi$.
\begin{Corollary}[{\cite[Corollary 4.1]{BCM}}]\label{C:Lp3}
Let $(X,\omega)$ be a compact K\"ahler manifold and $(L,h_0)$ 
be a positive line bundle on $X$. Let $h$ be a singular Hermitian
metric on $L$ with $c_1(L,h)\geq0$ and let $\psi\in PSH(X,\omega)$
be its global weight associated by $h=h_0e^{-2\psi}$.
Let $\{n_p\}_{p\geq1}$ be a sequence of natural numbers such 
that 
\begin{equation}\label{n_p}
n_p\to\infty\:\:\text{and $n_p/p\to0$ as $p\to\infty$}.
\end{equation} 
Let  $h_p$ be the metric on $L^p$ given by
\begin{equation}\label{h_p}
h_p=h^{p-n_p}\otimes h_0^{n_p}.
\end{equation}
For $p\geq1$ let $\sigma_p$ be probability measures on 
$\hp=H^0_{(2)}(X,L^p,h_p)$
satisfying condition (B).
Then the following hold:

\smallskip
\noindent
(i) If $\displaystyle\lim_{p\to\infty}C_p\,p^{-\nu}=0$ 
then $\displaystyle\frac{1}{p}\,\E[s_p=0]\to c_1(L,h)$\,, 
as $p\to \infty$, weakly on $X$. 

\smallskip
\noindent
(ii) If $\displaystyle\liminf_{p\to\infty}C_p\,p^{-\nu}=0$ 
then there exists a sequence of natural numbers $p_j\nearrow\infty$ 
such that for $\sigma$-a.\,e.\ sequence $\{s_p\}\in\mathcal{H}$ we have
as $j\to\infty$,
\[\frac{1}{p_j}\log|s_{p_j}|_{h_{p_j}}\to\psi\;\;
\text{in $L^1(X,\omega^n)$}\,,\quad
\frac{1}{p_j}[s_{p_j}=0]\to c_1(L,h)\,,\,
\text{weakly on $X$.}\]

\smallskip
\noindent
(iii) If $\displaystyle\sum_{p=1}^{\infty}C_p\,p^{-\nu}<\infty$ 
then for $\sigma$-a.\,e.\ sequence
$\{s_p\}\in\mathcal{H}$ we have as $p\to\infty$,
\[\frac{1}{p}\log|s_p|_{h_p}\to\psi\;\;
\text{in $L^1(X,\omega^n)$}\,,\quad
\frac{1}{p}[s_p=0] \to c_1(L,h)\,,\,\text{weakly on $X$.}\]
\end{Corollary}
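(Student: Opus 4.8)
The plan is to derive the corollary from Theorem \ref{th1} applied to the sequence $(L_p,h_p)=(L^p,h_p)$ with $h_p=h^{p-n_p}\otimes h_0^{n_p}$, checking its hypotheses and then rescaling the conclusions from $1/A_p$ to $1/p$ and passing from the Bergman metric $h_p$ back to the fixed metrics $h,h_0$. First I would verify (A1)--(B). A compact Kähler manifold is a compact normal Kähler space with $X_{\rm sing}=\emptyset$, so (A1) holds and the domination hypothesis \eqref{e:domin0} is vacuous. For (A2), additivity of the first Chern form under tensor powers together with $c_1(L,h)=\omega+dd^c\psi$ and $c_1(L,h_0)=\omega$ gives
\[
c_1(L^p,h_p)=(p-n_p)\,c_1(L,h)+n_p\,c_1(L,h_0)=p\,\omega+(p-n_p)\,dd^c\psi.
\]
Since $\psi\in PSH(X,\omega)$ means $\omega+dd^c\psi\ge0$, hence $dd^c\psi\ge-\omega$, for $p>n_p$ we obtain $c_1(L^p,h_p)\ge p\,\omega-(p-n_p)\,\omega=n_p\,\omega$; thus \eqref{e:pc} holds with $a_p=n_p\to\infty$ by \eqref{n_p}, and (B) is assumed. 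Here the two requirements in \eqref{n_p} play complementary roles: the twist by $h_0^{n_p}$, with $n_p\to\infty$, supplies the strict lower bound $a_p\,\omega$ that the bare power $h^p$ (whose curvature is only $\ge0$) cannot, while $n_p/p\to0$ will render this twist asymptotically negligible.

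Next I would compute $A_p$. As $\int_X dd^c\psi\wedge\omega^{n-1}=0$ by Stokes' theorem on the compact manifold $X$,
\[
A_p=\int_X c_1(L^p,h_p)\wedge\omega^{n-1}=p\int_X\omega^n=:p\,V,
\]
so that $C_pA_p^{-\nu}=V^{-\nu}\,C_p\,p^{-\nu}$ and the three hypotheses of Theorem \ref{th1}, phrased through $C_pA_p^{-\nu}$, coincide with the stated hypotheses on $C_p\,p^{-\nu}$. To transfer the conclusions, I would first note that $\tfrac1p\,c_1(L^p,h_p)=\omega+\tfrac{p-n_p}{p}\,dd^c\psi\to c_1(L,h)$ weakly, since $\psi\in L^1(X,\omega^n)$ and $n_p/p\to0$; combined with Theorem \ref{th1}(i) (multiplied by the constant $V$) this yields $\tfrac1p\,\E[s_p=0]\to c_1(L,h)$, and the almost sure statements in (ii) and (iii) are obtained in the same way. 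For the logarithms, Theorem \ref{th1} gives $\tfrac1{A_p}\log|s_p|_{h_p}\to0$, hence $\tfrac1p\log|s_p|_{h_p}\to0$ in $L^1$; using $h_p=h_0^p\,e^{-2(p-n_p)\psi}$, i.e. the pointwise identity $\log|s_p|_{h_p}=\log|s_p|_{h_0^p}-(p-n_p)\psi$, I would convert this into
\[
\tfrac1p\log|s_p|_{h_0^p}=\tfrac1p\log|s_p|_{h_p}+\tfrac{p-n_p}{p}\,\psi\to\psi\quad\text{in }L^1(X,\omega^n),
\]
which is the asserted convergence of the normalized logarithms to $\psi$ relative to the fixed smooth metric. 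The weak convergence of the zero currents then follows from the Lelong--Poincaré formula \eqref{e:LP}, here $[s_p=0]=p\,\omega+dd^c\log|s_p|_{h_0^p}$.

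\emph{Main obstacle.} The essential difficulty is precisely that $c_1(L,h)$ is only assumed $\ge0$ and need not be a Kähler current, so Theorem \ref{th1} does not apply to $(L^p,h^p)$ directly---its positivity hypothesis \eqref{e:pc} would force $a_p=0$. Everything hinges on the narrow window \eqref{n_p} for $n_p$: large enough that $a_p=n_p\to\infty$, yet small relative to $p$ so that the induced perturbations of the curvature, $\tfrac{n_p}{p}\,dd^c\psi$, and of the logarithm, $\tfrac{n_p}{p}\,\psi$, vanish in the limit. Making precise that these error terms genuinely tend to $0$---weakly for currents and in $L^1(X,\omega^n)$ for the logarithms, using only $\psi\in L^1$---is the crux of the argument.
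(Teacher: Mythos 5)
Your proposal is correct and follows essentially the paper's own route: apply Theorem \ref{th1} to the sequence $(L^p,h_p)$ with $h_p$ as in \eqref{h_p}, noting $c_1(L^p,h_p)\geq n_p\,\omega$ so that \eqref{e:pc} holds with $a_p=n_p$, computing $A_p=p\int_X\omega^n$ so the hypotheses match, and then untwisting via $\log|s_p|_{h_p}=\log|s_p|_{h_0^p}-(p-n_p)\psi$ together with $n_p/p\to0$ and the Lelong--Poincar\'e formula. One remark: your derivation correctly yields $\frac1p\log|s_p|_{h_p}\to0$, equivalently $\frac1p\log|s_p|_{h_0^p}\to\psi$, which shows that the subscript $h_p$ in the displayed conclusions of the Corollary should be read as $h_0^p$ (a slip in the statement, not a gap in your argument).
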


\medskip

In the case of polynomials in $\C^n$ the previous results take 
the following form (cf.\ \cite[Example 4.3]{BCM}). 
Consider $X={\mathbb P}^n$ and $L_p=\mO(p)$, $p\geq1$, 
where $\mathcal O(1)\to{\mathbb P}^n$ is the hyperplane line bundle.
Let ${\zeta\in\mathbb C}^n\mapsto[1:\zeta]\in{\mathbb P}^n$ 
be the standard embedding. 
It is well-known that the global holomorphic sections of $\mO(p)$ are given by homogeneous polynomials 
of degree $p$ in the homogeneous coordinates $z_0,\ldots,z_n$
on $\C^{n+1}$. On the chart 
$U_0=\{[1:\zeta]\in\mathbb{P}^n:\zeta\in\C^n\}\cong\C^n$ 
they coincide with the space $\C_p[\zeta]$ of polynomials of total degree at most $p$. 
Let $\omega_{\FS}$ denote the Fubini-Study 
K\"ahler form on $\mathbb{P}^n$ and $h_{\FS}$ be the Fubini-Study metric on $\mO(1)$,
so $c_1(\mO(1),h_{\FS})=\omega_{\FS}$\,. 
The set $PSH(\mathbb{P}^n,p\,\omega_{\FS})$
is in one-to-one correspondence 
to the set $p\mathcal{L}({\mathbb C}^n)$, where 
\[
\mathcal{L}({\mathbb C}^n):=\left\{\varphi\in PSH(\C^n):\,\exists\, C_\varphi\in\R \text{ such that
$\varphi(z)\leq \log^+\|z\|+C_\varphi$ on $\C^n$}\right\}
\]
is the Lelong class of entire psh functions with logarithmic  growth 
(cf.\ \cite[Section 2]{GZ05}). The $L^2$-space
\[
H^0_{(2)}(\mathbb{P}^n,\mO(p),h_p)=
\left\{s\in H^0(\mathbb{P}^n,\mO(p)):\int_{\mathbb{P}^n}|s|^2_{h_p}
\frac{\omega^n_{\FS}}{n!}<\infty\right\},
\]
is isometric to the $L^2$-space of polynomials
\begin{equation}\label{e:cp2}
\C_{p,(2)}[\zeta]=
\left\{f\in\C_p[\zeta] :\int_{\C^n}|f|^2e^{-2\varphi_p}
\frac{\omega^n_{\FS}}{n!}<\infty\right\}.
\end{equation}
If $\sigma_p$ are probability measures on $\C_{p,(2)}[\zeta]$
we denote by $\mathcal{H}$ the corresponding product probability space
\((\mathcal{H},\sigma)=
\left(\prod_{p=1}^\infty \C_{p,(2)}[\zeta],\prod_{p=1}^\infty\sigma_p\right) 
\).
\begin{Corollary}[{\cite[Corollary 4.4]{BCM}}]\label{C:pol2}
Consider a sequence of functions $\varphi_p\in p\mathcal{L}(\C^n)$
such that $dd^c\varphi_p\geq a_p\,\omega_{\FS}$ on $\C^n$, where
$a_p>0$ and $a_p\to\infty$ as $p\to\infty$. For $p\geq1$ let $\sigma_p$ 
be probability measures on $\C_{p,(2)}[\zeta]$ satisfying condition (B). 
Assume that $\sum_{p=1}^{\infty}C_pp^{-\nu}<\infty$\,.
Then for $\sigma$-a.\,e.\ sequence
$\{f_p\}\in\mathcal{H}$ we have as $p\to\infty$,
\begin{align*}
&\frac{1}{p}\Big(\log|f_p|-\varphi_p\Big)\to0\:\:
\text{in $L^1(\C^n,\omega_{\FS}^n)\,,\,$ hence in $L^1_{loc}(\C^n)$}\,,\\
&\frac{1}{p}\Big([f_p=0]-dd^c\varphi_p\Big)\to0\,,\:\:\text{weakly on $\C^n$}\,.
\end{align*}
\end{Corollary}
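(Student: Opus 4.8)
The plan is to read off Corollary \ref{C:pol2} as the special case of Theorem \ref{th1}(iii) in which $X=\mathbb P^n$, $\omega=\omega_{\FS}$ and $(L_p,h_p)=(\mathcal O(p),h_{\varphi_p})$, where $h_{\varphi_p}$ is the singular Hermitian metric on $\mathcal O(p)$ attached to the weight $\varphi_p$ by the correspondence $p\mathcal L(\mathbb C^n)\leftrightarrow PSH(\mathbb P^n,p\omega_{\FS})$ recalled just before \eqref{e:cp2}. Most of the work is a translation: the intrinsic statements of Theorem \ref{th1} live on $\mathbb P^n$, while the conclusion of the corollary is phrased on the affine chart $\mathbb C^n\cong U_0$.

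First I would pin down the dictionary on $U_0$. Writing a polynomial $f\in\C_p[\zeta]$ as the homogeneous section $F(z)=z_0^{\,p}f(z_1/z_0,\dots,z_n/z_0)$ of $\mathcal O(p)$, the Fubini--Study metric gives $|F|_{h_{\FS}^p}([1:\zeta])=|f(\zeta)|\,(1+\|\zeta\|^2)^{-p/2}=|f(\zeta)|\,e^{-p\phi_{\FS}(\zeta)}$ with $\phi_{\FS}:=\tfrac12\log(1+\|\zeta\|^2)$, $dd^c\phi_{\FS}=\omega_{\FS}$. Hence for $h_{\varphi_p}=h_{\FS}^p e^{-2\psi_p}$ with affine weight $\varphi_p=p\phi_{\FS}+\psi_p$ on $U_0$ one gets
\[
\log|F|_{h_{\varphi_p}}=\log|f|-\varphi_p,\qquad c_1(\mathcal O(p),h_{\varphi_p})\big|_{U_0}=dd^c\varphi_p .
\]
This is exactly the isometry $H^0_{(2)}(\mathbb P^n,\mathcal O(p),h_p)\cong\C_{p,(2)}[\zeta]$ recorded in \eqref{e:cp2}, so the measures $\sigma_p$ on $\C_{p,(2)}[\zeta]$ transport to measures on $H^0_{(2)}(\mathbb P^n,\mathcal O(p),h_p)$ still satisfying (B) with the same $\nu$ and $C_p$, since the identification is unitary and the supremum over unit vectors in (B) is invariant.

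Next I would check the hypotheses of Theorem \ref{th1}. Condition (A1) holds with the smooth K\"ahler form $\omega_{\FS}$ on the compact manifold $\mathbb P^n$; since $X_{\mathrm{sing}}=\emptyset$ the domination assumption \eqref{e:domin0} is vacuous. For (A2) the point is to promote the affine bound $dd^c\varphi_p\geq a_p\omega_{\FS}$ on $\mathbb C^n$ to the global bound \eqref{e:pc}, $c_1(\mathcal O(p),h_{\varphi_p})\geq a_p\omega_{\FS}$ on all of $\mathbb P^n$; this is the step I expect to require the most care, since it concerns the behaviour across the hyperplane at infinity $\mathbb P^n\setminus U_0$. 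Here the growth hypothesis $\varphi_p\in p\mathcal L(\mathbb C^n)$ is essential: it ensures, via the correspondence, that $\psi_p$ extends to a genuine element of $PSH(\mathbb P^n,p\omega_{\FS})$, and the sharper current inequality, which holds on the dense open set $U_0$ whose complement is pluripolar, then persists globally; together with $a_p\to\infty$ this is \eqref{e:pc}. Finally $A_p=\int_{\mathbb P^n}c_1(\mathcal O(p),h_{\varphi_p})\wedge\omega_{\FS}^{\,n-1}=p\int_{\mathbb P^n}\omega_{\FS}^{\,n}=p$, because $c_1(\mathcal O(p),h_{\varphi_p})$ is cohomologous to $p\omega_{\FS}$ and $\int_{\mathbb P^n}\omega_{\FS}^{\,n}=1$; thus the summability hypothesis $\sum_p C_p p^{-\nu}<\infty$ is precisely $\sum_p C_p A_p^{-\nu}<\infty$, the assumption of Theorem \ref{th1}(iii).

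It then remains to transfer the two conclusions back to $\mathbb C^n$. Theorem \ref{th1}(iii) gives, for $\sigma$-a.e.\ $\{s_p\}$, that $\frac1p\log|s_p|_{h_{\varphi_p}}\to0$ in $L^1(\mathbb P^n,\omega_{\FS}^{\,n})$ and $\frac1p\big([s_p=0]-c_1(\mathcal O(p),h_{\varphi_p})\big)\to0$ weakly on $\mathbb P^n$. Restricting the first convergence to $U_0$ and invoking the dictionary yields $\frac1p(\log|f_p|-\varphi_p)\to0$ in $L^1(\mathbb C^n,\omega_{\FS}^{\,n})$; as $\omega_{\FS}^{\,n}$ is comparable to Lebesgue measure on every compact subset of $\mathbb C^n$, this also gives $L^1_{loc}(\mathbb C^n)$ convergence. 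For the currents, every test form with compact support in $\mathbb C^n\cong U_0$ is a test form on $\mathbb P^n$, so weak convergence on $\mathbb P^n$ restricts to weak convergence on $\mathbb C^n$, giving $\frac1p\big([f_p=0]-dd^c\varphi_p\big)\to0$; alternatively the second statement follows from the first through the Lelong--Poincar\'e formula \eqref{e:LP}, as noted before Theorem \ref{th1}. This is the content of Corollary \ref{C:pol2}.
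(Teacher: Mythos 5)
Your proposal is correct and follows exactly the route the paper intends: Corollary \ref{C:pol2} is obtained by specializing Theorem \ref{th1}(iii) to $X=\mathbb{P}^n$, $L_p=\mathcal{O}(p)$ with the singular metrics attached to $\varphi_p$ via the correspondence $p\mathcal{L}(\C^n)\leftrightarrow PSH(\mathbb{P}^n,p\,\omega_{\FS})$, noting $A_p=p$ and that \eqref{e:domin0} is vacuous, and then translating back to the affine chart through the isometry \eqref{e:cp2}. Your extra care about extending the positivity $dd^c\varphi_p\geq a_p\,\omega_{\FS}$ across the hyperplane at infinity is the right point to flag, and your argument for it is sound.
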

In particular, one has:
\begin{Corollary}[{\cite[Corollary 4.5]{BCM}}]\label{C:pol3}
Let $\varphi\in \mathcal{L}(\C^n)$
such that $dd^c\varphi\geq \varepsilon\,\omega_{\FS}$ on $\C^n$
for some constant $\varepsilon>0$. 
For $p\geq1$ construct the spaces $\C_{p,(2)}[\zeta]$ by setting
of $\varphi_p=p\varphi$ in \eqref{e:cp2} and let 
$\sigma_p$ be probability measures on $\C_{p,(2)}[\zeta]$
satisfying condition (B). If $\sum_{p=1}^{\infty}C_p\,p^{-\nu}<\infty$,
then for $\sigma$-a.\,e.\ sequence
$\{f_p\}\in\mathcal{H}$ we have as $p\to\infty$,
\begin{equation}\label{e:pp}
\frac{1}{p}\log|f_p|\to\varphi\:\:
\text{in $L^1(\C^n,\omega_{\FS}^n)$\,,}\quad 
\frac{1}{p}[f_p=0]\to dd^c\varphi\,,\:\:\text{weakly on $\C^n$\,.}
\end{equation}
\end{Corollary}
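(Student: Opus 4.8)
\emph{Proof proposal.}
The plan is to obtain Corollary \ref{C:pol3} as a direct specialization of Corollary \ref{C:pol2}, applied to the particular sequence of weights $\varphi_p:=p\varphi$. First I would check that this choice satisfies all the hypotheses of Corollary \ref{C:pol2}. Since $\varphi\in\mathcal{L}(\C^n)$, the defining bound $\varphi(z)\leq\log^+\|z\|+C_\varphi$ scales to $\varphi_p(z)\leq p\log^+\|z\|+pC_\varphi$, so $\varphi_p\in p\mathcal{L}(\C^n)$. The strict positivity hypothesis $dd^c\varphi\geq\varepsilon\,\omega_{\FS}$ gives $dd^c\varphi_p=p\,dd^c\varphi\geq p\varepsilon\,\omega_{\FS}$, so the required lower bound $dd^c\varphi_p\geq a_p\,\omega_{\FS}$ holds with $a_p=p\varepsilon$, and indeed $a_p\to\infty$ as $p\to\infty$. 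The spaces $\C_{p,(2)}[\zeta]$ are constructed from $\varphi_p=p\varphi$ exactly as in \eqref{e:cp2}, the measures $\sigma_p$ satisfy condition (B) by assumption, and the summability condition $\sum_{p=1}^\infty C_p\,p^{-\nu}<\infty$ is assumed verbatim in both statements.

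With the hypotheses verified, I would simply read off the conclusion of Corollary \ref{C:pol2}. For $\sigma$-a.e.\ sequence $\{f_p\}\in\mathcal{H}$ it yields, as $p\to\infty$,
\[
\frac{1}{p}\bigl(\log|f_p|-\varphi_p\bigr)\to0 \ \text{ in } L^1(\C^n,\omega_{\FS}^n)\,,\qquad
\frac{1}{p}\bigl([f_p=0]-dd^c\varphi_p\bigr)\to0 \ \text{ weakly on } \C^n.
\]
Because $\tfrac{1}{p}\varphi_p=\varphi$ and $\tfrac{1}{p}\,dd^c\varphi_p=dd^c\varphi$ are now independent of $p$, the first relation becomes $\tfrac{1}{p}\log|f_p|\to\varphi$ in $L^1(\C^n,\omega_{\FS}^n)$ and the second becomes $\tfrac{1}{p}[f_p=0]\to dd^c\varphi$ weakly on $\C^n$. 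These are precisely the assertions \eqref{e:pp}.

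There is no genuine obstacle here, since the statement is a corollary by design; the only point requiring attention is that a \emph{fixed} weight $\varphi$ must, after the scaling $\varphi_p=p\varphi$, supply the divergent curvature sequence $a_p=p\varepsilon\to\infty$ demanded by Corollary \ref{C:pol2}. This is exactly where the strict positivity $dd^c\varphi\geq\varepsilon\,\omega_{\FS}$ (rather than merely $dd^c\varphi\geq0$) enters: it places the problem in the regime covered by the general equidistribution result and underlies the growth of the normalizing quantities driving the whole scheme. Replacing the difference statements of Corollary \ref{C:pol2} by convergence to the $p$-independent limits $\varphi$ and $dd^c\varphi$ then completes the argument.
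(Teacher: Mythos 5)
Your proposal is correct and matches the paper's own route: the paper introduces Corollary \ref{C:pol3} with ``In particular, one has:'' immediately after Corollary \ref{C:pol2}, i.e.\ it is obtained exactly by specializing $\varphi_p=p\varphi$, noting $dd^c\varphi_p=p\,dd^c\varphi\geq p\varepsilon\,\omega_{\FS}$ so $a_p=p\varepsilon\to\infty$, and dividing by $p$. Your verification of the hypotheses and the remark on where strict positivity enters are exactly the relevant points.
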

Corollary \ref{C:Lp3}  can also be applied to the setting of polynomials in $\C^n$
to obtain a version of Corollary \ref{C:pol3} for arbitrary $\varphi\in \mathcal{L}(\C^n)$.
\begin{Corollary}[{\cite[Corollary 4.6]{BCM}}]\label{C:pol4}
Let $\varphi\in \mathcal{L}(\C^n)$ and let $h$ be the singular Hermitian metric 
on $\mO(1)$ corresponding to $\varphi$. 
Let $\{n_p\}_{p\geq1}$ 
be a sequence of natural numbers such 
that \eqref{n_p} is satisfied.
Consider the metric $h_p$ on $\mO(p)$ given by
$h_p=h^{p-n_p}\otimes h_{\FS}^{n_p}$ (cf.\ \eqref{h_p}).
For $p\geq1$ let $\sigma_p$ be probability measures on 
$H^0_{(2)}(\mathbb{P}^n,\mO(p),h_p)\cong \C_{p,(2)}[\zeta]$ 
satisfying condition (B). 
If $\sum_{p=1}^{\infty}C_p\,p^{-\nu}<\infty$,
then for $\sigma$-a.\,e.\ sequence
$\{f_p\}\in\mathcal{H}$ we have \eqref{e:pp} as $p\to\infty$.
\end{Corollary}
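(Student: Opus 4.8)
The plan is to read Corollary~\ref{C:pol4} off Corollary~\ref{C:Lp3}, applied to the projective model $X=\mathbb{P}^n$, $\omega=\omega_{\FS}$, with positive line bundle $(L,h_0)=(\mO(1),h_{\FS})$; the entire content is then the transcription of the Bergman-space conclusions into the polynomial picture via the isometry $H^0_{(2)}(\mathbb{P}^n,\mO(p),h_p)\cong\C_{p,(2)}[\zeta]$ recorded in \eqref{e:cp2}. Since $(\mO(1),h_{\FS})$ is positive with $c_1(\mO(1),h_{\FS})=\omega_{\FS}$, the standing hypotheses of Corollary~\ref{C:Lp3} on $(X,\omega)$ and $(L,h_0)$ hold automatically, so no positivity needs to be re-established here.

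First I would fix the dictionary between weights. Let $\phi_{\FS}$ denote the local potential of $\omega_{\FS}$ on the chart $U_0\cong\C^n$, so that the frame $e_0$ of $\mO(1)$ over $U_0$ has $|e_0|_{h_{\FS}}=e^{-\phi_{\FS}}$. Using the one-to-one correspondence between $\mathcal{L}(\C^n)$ and $PSH(\mathbb{P}^n,\omega_{\FS})$ recalled before Corollary~\ref{C:pol2}, the function $\varphi\in\mathcal{L}(\C^n)$ corresponds to $\psi:=\varphi-\phi_{\FS}\in PSH(\mathbb{P}^n,\omega_{\FS})$, and the singular metric $h=h_{\FS}e^{-2\psi}$ is exactly the metric on $\mO(1)$ corresponding to $\varphi$, i.e.\ its local weight on $U_0$ is $\varphi$, with $c_1(\mO(1),h)=\omega_{\FS}+dd^c\psi=dd^c\varphi\geq0$. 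The twisted metric $h_p=h^{p-n_p}\otimes h_{\FS}^{n_p}$ then matches \eqref{h_p}, and its local weight on $U_0$ is $\varphi_p=(p-n_p)\varphi+n_p\phi_{\FS}$, the very weight defining $\C_{p,(2)}[\zeta]$ in \eqref{e:cp2}. Because condition (B) is phrased intrinsically in terms of $|\langle a,u\rangle|$ for unit $u$, it is invariant under the unitary identification induced by any orthonormal basis, so measures $\sigma_p$ on $\C_{p,(2)}[\zeta]$ satisfying (B) transfer to measures on the Bergman space satisfying (B) with the same $\nu$ and $C_p$; the assumption $\sum_p C_p p^{-\nu}<\infty$ is precisely the hypothesis of Corollary~\ref{C:Lp3}(iii).

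With this in place I would invoke Corollary~\ref{C:Lp3}(iii): for $\sigma$-a.e.\ sequence $\{s_p\}$ one has $\frac1p[s_p=0]\to c_1(\mO(1),h)=dd^c\varphi$ weakly on $\mathbb{P}^n$, together with the accompanying $L^1$-convergence of the normalized logarithms of the pointwise norms. Writing $s_p=f_p\,e_0^{\otimes p}$ on $U_0$, the zero divisor of $s_p$ meets $U_0$ in the zero divisor of the polynomial $f_p$, so the current limit restricts to $\frac1p[f_p=0]\to dd^c\varphi$ weakly on $\C^n$. For the potentials, the local identity $\log|s_p|_{h_p}=\log|f_p|-\varphi_p$ gives $\frac1p\log|f_p|=\frac1p\log|s_p|_{h_p}+\frac1p\varphi_p$; since $\frac1p c_1(\mO(p),h_p)=\frac{p-n_p}{p}dd^c\varphi+\frac{n_p}{p}\omega_{\FS}\to dd^c\varphi$ as well, the Lelong-Poincar\'e formula \eqref{e:LP} forces $\frac1p\log|s_p|_{h_p}\to0$ in $L^1(\mathbb{P}^n,\omega_{\FS}^n)$, while $\frac1p\varphi_p=\frac{p-n_p}{p}\varphi+\frac{n_p}{p}\phi_{\FS}\to\varphi$ in $L^1(\C^n,\omega_{\FS}^n)$ (both $\varphi$ and $\phi_{\FS}$ lie in $L^1(\C^n,\omega_{\FS}^n)$ by their logarithmic growth). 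Combining these yields $\frac1p\log|f_p|\to\varphi$ in $L^1(\C^n,\omega_{\FS}^n)$, hence in $L^1_{loc}(\C^n)$ because $\omega_{\FS}^n$ is comparable to Lebesgue measure on compact subsets of $\C^n$; this is the first line of \eqref{e:pp}, and applying $dd^c$ (continuous for the weak topology) recovers the second line, in agreement with the current limit already found.

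The one point that genuinely requires care — and the only place where \eqref{n_p} enters — is the washing-out of the auxiliary twist. The factor $h_{\FS}^{n_p}$ is inserted precisely so that $c_1(\mO(p),h_p)\geq n_p\,\omega_{\FS}$ is a K\"ahler current with $n_p\to\infty$, which is what makes the equidistribution machinery of Corollary~\ref{C:Lp3} applicable even when $dd^c\varphi$ is only $\geq0$ and not a K\"ahler current; yet its contribution to the normalized weight is of order $n_p/p$, and the condition $n_p/p\to0$ is exactly what guarantees $\frac1p\varphi_p\to\varphi$ rather than a perturbation of $\varphi$. Verifying this limit, together with the harmless passage of the $L^1$ and current convergences from $\mathbb{P}^n$ to $\C^n$ across the $\omega_{\FS}^n$-null hyperplane at infinity, is the substance of the argument; everything else is a direct transcription of Corollary~\ref{C:Lp3}.
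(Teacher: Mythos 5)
Your proposal is correct and follows exactly the route the paper indicates for this statement: specialize Corollary \ref{C:Lp3} to $X=\mathbb{P}^n$, $(L,h_0)=(\mO(1),h_{\FS})$ with $\psi=\varphi-\phi_{\FS}$ (in your notation), and transfer the conclusions to $\C_{p,(2)}[\zeta]$ through the isometry \eqref{e:cp2}, with \eqref{n_p} entering only to wash out the $n_p\phi_{\FS}/p$ contribution to the weight. The one wording to adjust is that the $L^1$ convergence of $\frac1p\log|s_p|_{h_p}$ is not \emph{forced} by the Lelong--Poincar\'e formula (weak convergence of the $dd^c$'s does not yield $L^1$ convergence of the potentials); it is simply the first assertion of the cited corollary, equivalently of Theorem \ref{th1}(iii), which in this normalization reads $\frac1p\log|s_p|_{h_p}\to0$ because $A_p=p\int_{\mathbb{P}^n}\omega_{\FS}^n$, and should be quoted as such.
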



\section{Measures with totally real support}\label{S:mtrs}

We give here several important examples of measures supported in 
${\mathbb R}^k\subset{\mathbb C}^k$, that satisfy condition (B). 
If  $a,v\in{\mathbb C}^k$ we set $a=(a_1,\ldots,a_k)$, $v=(v_1,\ldots,v_k)$, 
\[\langle a,v\rangle=\sum_{j=1}^ka_jv_j\,,\,\;|v|^2=\langle v,\overline v\rangle=\sum_{j=1}^k|v_j|^2\,.\]
Moreover, we let $\lambda_k$ be the Lebesgue measure on $\R^k\subset\C^k$.

\subsection{Preliminary lemma}\label{SS:pl}
Let us start with a technical lemma which will be needed to 
estimate the moments in (B). Let $\sigma$ be a probability 
measure supported in ${\mathbb R}^k\subset{\mathbb C}^k$. 
For $\nu\geq1$ and $v\in{\mathbb C}^k$ we define 
\[\begin{split}
I_k=I_k(\sigma,\nu)&=\int_{{\mathbb R}^k}\big|\log|a_1|\big|^\nu\,d\sigma(a)\,,\\
J_k(v)=J_k(v;\sigma,\nu)&=
\int_{{\mathbb R}^k}\big|\log|\langle a,v\rangle|\big|^\nu\,d\sigma(a)\,,\\
K_k(v)=K_k(v;\sigma,\nu)&=
\int_{\{a\in{\mathbb R}^k:
\,|a_1||v|>1/\sqrt2\}}\big|\log(|a_1||v|)\big|^\nu\,d\sigma(a)\,.
\end{split}\]
Note that if $u\in{\mathbb C}^k$, $|u|=1$, $J_k(u)$ 
is the logarithmic moment of $\sigma$ considered in (B).

\begin{Lemma}\label{L:ss}
Let $\sigma$ be a rotation invariant probability measure supported in 
${\mathbb R}^k\subset{\mathbb C}^k$ and let $\nu\geq1$. 
If $u=s+it\in{\mathbb C}^k$, $s,t\in{\mathbb R}^k$, $|u|=1$ 
and $|s|\geq|t|$ then 
\[J_k(u)\leq 2^{2\nu-1}I_k+2^{\nu-1}K_k(t)+2^\nu\,.\]
Moreover if $\sigma$ is supported in the closed unit ball 
$B_k\subset{\mathbb R}^k$ then 
\[J_k(u)\leq 2^{\nu-1}I_k+1\,.\]
\end{Lemma}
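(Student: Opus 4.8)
\emph{Setup and reduction to one dimension.}
The plan is to split the complex linear form into its real and imaginary parts and reduce all moments to one-dimensional integrals via rotation invariance. Write $u=s+it$ with $s,t\in\mathbb{R}^k$; since $a\in\mathbb{R}^k$, both $\alpha:=\langle a,s\rangle$ and $\beta:=\langle a,t\rangle$ are real, so that $|\langle a,u\rangle|^2=\alpha^2+\beta^2$ and therefore $\max(|\alpha|,|\beta|)\le|\langle a,u\rangle|\le\sqrt{2}\,\max(|\alpha|,|\beta|)$. Rotation invariance of $\sigma$ supplies the reduction I will use repeatedly: for any $v\in\mathbb{R}^k$ and Borel $g\ge0$ one has $\int g(\langle a,v\rangle)\,d\sigma(a)=\int g(|v|\,a_1)\,d\sigma(a)$, by an orthogonal change of variable taking $v$ to $|v|e_1$. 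Two consequences are $\int\big|\log|\alpha|\big|^\nu d\sigma=\int\big|\log(|s||a_1|)\big|^\nu d\sigma$ and $\int_{\{|\beta|>1/\sqrt2\}}\big|\log|\beta|\big|^\nu d\sigma=K_k(t)$. Finally, $|u|=1$ and $|s|\ge|t|$ force $|s|^2\ge\tfrac12$, hence $1/\sqrt2\le|s|\le1$ and $\big|\log|s|\big|\le\tfrac12\log2$; this is what lets me trade $\int|\log|\alpha||^\nu d\sigma$ for $I_k$ up to the convexity inequality $(x+y)^\nu\le 2^{\nu-1}(x^\nu+y^\nu)$.

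\emph{The partition.}
The heart of the argument is a partition of $\mathbb{R}^k$ into $A=\{|\langle a,u\rangle|\le1\}$, $B_1=\{|\langle a,u\rangle|>1,\ |\alpha|\ge|\beta|\}$ and $B_2=\{|\langle a,u\rangle|>1,\ |\beta|>|\alpha|\}$, designed so that the blow-up of $\big|\log|\langle a,u\rangle|\big|$ near $|\langle a,u\rangle|=0$ is routed entirely through $\alpha$. On $A$ the inequality $|\langle a,u\rangle|\ge|\alpha|$ together with $|\langle a,u\rangle|\le1$ gives $\big|\log|\langle a,u\rangle|\big|\le\big|\log|\alpha|\big|$. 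On $B_1$ the sandwich yields $\big|\log|\langle a,u\rangle|\big|=\log|\langle a,u\rangle|\le\tfrac12\log2+\big|\log|\alpha|\big|$. The decisive point is $B_2$: there $|\langle a,u\rangle|>1$ and $|\beta|\ge|\alpha|$ force $\sqrt2\,|\beta|\ge|\langle a,u\rangle|>1$, so $|\beta|>1/\sqrt2$ and hence $B_2\subseteq\{|\beta|>1/\sqrt2\}$; moreover $\big|\log|\langle a,u\rangle|\big|\le\tfrac12\log2+\big|\log|\beta|\big|$. Thus the $A$- and $B_1$-contributions will be controlled by $\int|\log|\alpha||^\nu d\sigma$, while the $B_2$-contribution is controlled by the restricted integral equal to $K_k(t)$.

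\emph{Integration and constants.}
It remains to integrate, apply convexity, and collect constants. On $A\cup B_1$ I bound the integrand by $(\tfrac12\log2+|\log|\alpha||)^\nu\le 2^{\nu-1}|\log|\alpha||^\nu+2^{\nu-1}(\tfrac12\log2)^\nu$ and use $\int_{\mathbb{R}^k}|\log|\alpha||^\nu d\sigma\le 2^{\nu-1}I_k+2^{\nu-1}(\tfrac12\log2)^\nu$; this produces the coefficient $2^{2\nu-2}\le 2^{2\nu-1}$ in front of $I_k$. On $B_2$ the same convexity step produces exactly $2^{\nu-1}K_k(t)$. Using the total mass $\sigma(\mathbb{R}^k)=1$ and $(\log2)^\nu\le1$, the free constants collect to at most $2^{\nu-2}+\tfrac12$, which is $\le 2^\nu$ for every $\nu\ge1$; this gives the first assertion. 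For the second, if $\sigma$ is supported in the unit ball then $|\langle a,u\rangle|\le|a|\,|u|=|a|\le1$ on the support, so $B_1=B_2=\emptyset$ and only region $A$ survives; the same estimate reduces to $J_k(u)\le 2^{\nu-1}I_k+\tfrac12(\log2)^\nu\le 2^{\nu-1}I_k+1$.

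\emph{Main obstacle.}
I expect the main difficulty to be precisely the design of the partition so that $|\log|\beta||$ is integrated only on $\{|\beta|>1/\sqrt2\}$: the naive pointwise bound $\big|\log|\langle a,u\rangle|\big|\le\tfrac12\log2+\max(|\log|\alpha||,|\log|\beta||)$ would force integration of $|\log|\beta||^\nu$ over the whole region where $\beta$ dominates, including arbitrarily small $|\beta|$, which $K_k(t)$ does not see. Keeping the small-$|\langle a,u\rangle|$ regime inside $A$, where $|\alpha|\le|\langle a,u\rangle|$ makes $\alpha$ the dominant contributor to the logarithmic blow-up, is what makes the bound by $I_k$ and $K_k(t)$ possible; the remaining care is the bookkeeping needed to keep the additive constant at $2^\nu$ rather than something growing like $4^\nu$, which requires not overestimating $(\tfrac12\log2)^\nu$.
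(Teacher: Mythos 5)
Your proof is correct and follows essentially the same route as the paper's: your partition $A$, $B_1$, $B_2$ is exactly the paper's $E_1$, $E_2^+$, $E_2^-$, and the reduction to $I_k$ and $K_k(t)$ via rotation invariance and the convexity inequality $(x+y)^\nu\le 2^{\nu-1}(x^\nu+y^\nu)$ matches the paper step for step. The only (harmless) deviation is that you apply the $\tfrac12\log 2$-shifted bound on $A\cup B_1$ jointly, which actually gives the slightly smaller coefficient $2^{2\nu-2}$ on $I_k$, and your constant bookkeeping still lands within $2^\nu$.
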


\begin{proof}
Let 
\[\begin{split}
&E_1=\{a\in{\mathbb R}^k:\,|\langle a,u\rangle|\leq1\}\,,\,\;
E_2=\{a\in{\mathbb R}^k:\,|\langle a,u\rangle|>1\}\,,\\
&J_k(u)=J_k^1(u)+J_k^2(u)\,,\,\;J_k^j(u):=
\int_{E_j}\big|\log|\langle a,u\rangle|\big|^\nu\,d\sigma(a)\,.
\end{split}\]

Note that $|\langle a,u\rangle|^2=\langle a,s\rangle^2+
\langle a,t\rangle^2$ and $|s|\geq1/\sqrt2$. On $E_1$ we have 
$|\langle a,s\rangle|\leq|\langle a,u\rangle|\leq1$, hence 
\begin{equation}\label{e:I1}
J_k^1(u)\leq\int_{E_1}\big|\log|\langle a,s\rangle|\big|^\nu\,d\sigma(a)\leq J_k(s)\,,
\end{equation}
where 
\[J_k(s)=\int_{{\mathbb R}^k}\big|\log|\langle a,s\rangle|\big|^\nu\,d\sigma(a)
=\int_{{\mathbb R}^k}\big|\log(|a_1||s|)\big|^\nu\,d\sigma(a)\,,\]
since $\sigma$ is rotation invariant. Using Jensen's inequality and 
$1\geq|s|\geq1/\sqrt2\,$ we get 
\[\big|\log(|a_1||s|)\big|^\nu\leq\left(\big|\log|a_1|\big|+
\log\sqrt2\right)^\nu\leq2^{\nu-1}\big|\log|a_1|\big|^\nu+\frac{(\log 2)^\nu}{2}\,.\]
Therefore
\begin{equation}\label{e:I2}
J_k(s)\leq\int_{{\mathbb R}^k}\left(2^{\nu-1}\big|\log|a_1|\big|^\nu+
\frac{1}{2}\right)\,d\sigma(a)=2^{\nu-1}I_k+\frac{1}{2}\,.
\end{equation}
If $\supp\sigma\subseteq B_k$ then, since $B_k\subset E_1$, we have
\[J_k(u)=\int_{B_k}\big|\log|\langle a,u\rangle|\big|^\nu\,d\sigma(a)=J_k^1(u)\leq J_k(s)\,,\]
so the lemma follows from \eqref{e:I2}. 

We estimate next $J_k^2(u)$. To this end we write $E_2=E_2^+\cup E_2^-$, where 
\[E_2^+:=E_2\cap\{a\in{\mathbb R}^k:\,
|\langle a,s\rangle|\geq|\langle a,t\rangle|\}\,,\,\;E_2^-:=
E_2\cap\{a\in{\mathbb R}^k:\,|\langle a,s\rangle|<|\langle a,t\rangle|\}\,.\]
On $E_2^+$ we have  
$1<|\langle a,u\rangle|^2\leq2\langle a,s\rangle^2$, so 
$0<\log|\langle a,u\rangle|\leq\big|\log|\langle a,s\rangle|\big|+\log\sqrt2$. 
Jensen's inequality yields that 
\[\big|\log|\langle a,u\rangle|\big|^\nu\leq
2^{\nu-1}\big|\log|\langle a,s\rangle|\big|^\nu+\frac{(\log 2)^\nu}{2}\,.\]
We obtain
\[\int_{E_2^+}\big|\log|\langle a,u\rangle|\big|^\nu\,d\sigma(a)\leq
\int_{E_2^+}\left(2^{\nu-1}\big|\log|\langle a,s\rangle|\big|^\nu+
\frac{1}{2}\right)\,d\sigma(a)\leq2^{\nu-1}J_k(s)+\frac{1}{2}\,,\]
and by \eqref{e:I2},
\begin{equation}\label{e:I3}
\int_{E_2^+}\big|\log|\langle a,u\rangle|\big|^\nu\,d\sigma(a)\leq
2^{2(\nu-1)}I_k+2^{\nu-2}+\frac{1}{2}\,.
\end{equation}

Finally, on $E_2^-$ we have  $1<|\langle a,u\rangle|^2\leq
2\langle a,t\rangle^2$, so 
$E_2^-\subseteq \{a\in{\mathbb R}^k:\,|\langle a,t\rangle|>1/\sqrt2\}$. 
Hence by Jensen's inequality, 
\[\big|\log|\langle a,u\rangle|\big|^\nu\leq
2^{\nu-1}\big|\log|\langle a,t\rangle|\big|^\nu+\frac{(\log 2)^\nu}{2}\,.\]
Therefore
\[\int_{E_2^-}\big|\log|\langle a,u\rangle|\big|^\nu\,d\sigma(a)\leq
2^{\nu-1}\int_{\{a\in{\mathbb R}^k:\,|\langle a,t\rangle|>1/\sqrt2\}}
\big|\log|\langle a,t\rangle|\big|^\nu\,d\sigma(a)+\frac{1}{2}\,.\]
Since $\sigma$ is rotation invariant this implies that 
\begin{equation}\label{e:I4}
\int_{E_2^-}\big|\log|\langle a,u\rangle|\big|^\nu\,d\sigma(a)\leq
2^{\nu-1}K_k(t)+\frac{1}{2}\,.
\end{equation}

By \eqref{e:I3} and \eqref{e:I4} we get 
\begin{equation}\label{e:I5}
J_k^2(u)\leq 2^{2(\nu-1)}I_k+2^{\nu-1}K_k(t)+2^{\nu-2}+1\,.
\end{equation}
Hence by \eqref{e:I1}, \eqref{e:I2} and \eqref{e:I5},
\[J_k(u)\leq\left(2^{\nu-1}+2^{2(\nu-1)}\right)I_k+
2^{\nu-1}K_k(t)+2^{\nu-2}+\frac{3}{2}\leq2^{2\nu-1}I_k+2^{\nu-1}K_k(t)+2^\nu\,.\]
\end{proof}

\subsection{Real Gaussians}\label{SS:rG}
Let  $\sigma_k$ be the measure on $\R^k\subset\C^k$ with density 
\begin{equation}\label{e:Gauss}
d\sigma_k(a)=\pi^{-k/2}\,e^{-|a|^2}\,d\lambda_k(a)\,,
\end{equation}
where $a=(a_1,\ldots,a_k)\in\R^k$.

\begin{Proposition}\label{P:Gauss} For every integer $k\geq1$, 
every $\nu\geq1$, and every $u\in\C^k$ with $|u|=1$, we have 
\[J_k(u;\sigma_k,\nu)=\int_{\R^k}\big|\log|\langle a,u\rangle|\big|^\nu\,d\sigma_k(a)
\leq\Gamma_\nu:=2^{2\nu}\int_0^{+\infty}\big|\log x\big|^\nu e^{-x^2}\,dx+2^\nu\,.\]
\end{Proposition}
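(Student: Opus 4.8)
The plan is to apply Lemma~\ref{L:ss} to the rotation-invariant Gaussian $\sigma_k$ and thereby reduce the estimate on $J_k(u;\sigma_k,\nu)$ to explicit one-dimensional Gaussian integrals. Write $u=s+it$ with $s,t\in\R^k$; by symmetry (replacing $u$ by $\ov u$ if necessary, which does not change $|\langle a,u\rangle|$ for $a\in\R^k$) we may assume $|s|\geq|t|$, so the lemma gives
\begin{equation}\label{e:plan1}
J_k(u)\leq 2^{2\nu-1}I_k+2^{\nu-1}K_k(t)+2^\nu\,.
\end{equation}
Thus it suffices to bound the two auxiliary quantities $I_k=I_k(\sigma_k,\nu)$ and $K_k(t;\sigma_k,\nu)$, both of which I expect to reduce to the single integral $\int_0^\infty|\log x|^\nu e^{-x^2}\,dx$ appearing in the claimed constant $\Gamma_\nu$.

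First I would compute $I_k$. Since $\sigma_k$ is a product measure and the integrand $\big|\log|a_1|\big|^\nu$ depends only on the first coordinate, the Gaussian factors in $a_2,\ldots,a_k$ integrate to $1$, leaving
\begin{equation}\label{e:plan2}
I_k=\pi^{-1/2}\int_{\R}\big|\log|a_1|\big|^\nu e^{-a_1^2}\,da_1
=2\pi^{-1/2}\int_0^{\infty}|\log x|^\nu e^{-x^2}\,dx\,,
\end{equation}
using evenness of the integrand. This is already comparable to $\Gamma_\nu$ up to harmless constants.

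Next I would handle $K_k(t)$. Here the key point is rotation invariance of $\sigma_k$: since $t\in\R^k$ and $|t|\leq|u|=1$, after an orthogonal change of variables aligning $t$ with the first axis the integrand $\big|\log(|a_1||t|)|^\nu$ over the region $\{|a_1||t|>1/\sqrt2\}$ again collapses to a one-dimensional Gaussian integral in the variable $x=|a_1||t|$. I expect the substitution together with $|t|\leq1$ to yield a bound of the form $K_k(t)\leq C\int_0^\infty|\log x|^\nu e^{-x^2}\,dx$ with an explicit constant, uniformly in $t$; the uniformity is what allows the final bound to be independent of $u$. Substituting these bounds for $I_k$ and $K_k(t)$ into \eqref{e:plan1} and collecting the numerical factors should produce exactly $\Gamma_\nu=2^{2\nu}\int_0^\infty|\log x|^\nu e^{-x^2}\,dx+2^\nu$.

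The main obstacle is bookkeeping rather than conceptual: verifying that the scaling $x=|a_1||t|$ in $K_k(t)$ really does produce a bound dominated by $\int_0^\infty|\log x|^\nu e^{-x^2}\,dx$ uniformly over all $t$ with $|t|\leq1$, including the degenerate case $t=0$ (where $K_k(t)=0$ and the estimate is trivial), and then tracking the powers of $2$ so that the accumulated constants fit inside the stated $\Gamma_\nu$ with room to spare rather than exceeding it. A secondary point worth checking is the reduction $|s|\geq|t|$: from $|u|^2=|s|^2+|t|^2=1$ one has $|s|\geq1/\sqrt2$, which is precisely the hypothesis Lemma~\ref{L:ss} requires, so no extra argument is needed there.
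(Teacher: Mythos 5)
Your plan is correct and follows essentially the same route as the paper: apply Lemma \ref{L:ss}, compute $I_k=2\pi^{-1/2}\int_0^{+\infty}|\log x|^\nu e^{-x^2}\,dx$ by Fubini, and bound $K_k(t)$ uniformly in $t$ via the substitution you describe together with the elementary estimate $|t|^{-1}e^{-1/(4|t|^2)}\le\sqrt{2/e}<1$ (valid since $|t|\le 1/\sqrt2$), which is exactly the bookkeeping step you flagged. One small correction: to arrange $|s|\ge|t|$ you should replace $u$ by $iu$, which swaps the roles of $s$ and $t$ while preserving $|\langle a,u\rangle|$ for $a\in\R^k$, rather than by $\ov u$, which leaves $|s|$ and $|t|$ unchanged.
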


\begin{proof}
We write $u=s+it$, $s,t\in\R^k$, and assume without loss of generality 
that $|s|\geq|t|$. Fubini's theorem implies that 
\[\begin{split}
I_k&=\int_{{\mathbb R}^k}\big|\log|a_1|\big|^\nu\,d\sigma_k(a)=
2\pi^{-1/2}\int_0^{+\infty}\big|\log x\big|^\nu e^{-x^2}\,dx\,,\\
K_k(t)&=2\pi^{-1/2}\int_{\{x|t|>1/\sqrt2\}}\big|\log(x|t|)\big|^\nu 
e^{-x^2}\,dx=\frac{2}{|t|\sqrt\pi}
\int_{1/\sqrt2}^{+\infty}\big|\log x\big|^\nu e^{-x^2/|t|^2}\,dx\,.
\end{split}\]
Note that $1=|u|^2\geq2|t|^2$. For $x\geq1/\sqrt2\,$ we have 
\[e^{-x^2/|t|^2}=e^{-x^2/(2|t|^2)}e^{-x^2/(2|t|^2)}\leq 
e^{-x^2}e^{-1/(4|t|^2)}\,,\]
so 
\[K_k(t)\leq\frac{2}{\sqrt\pi}\;|t|^{-1}e^{-1/(4|t|^2)}\int_0^{+\infty}
\big|\log x\big|^\nu e^{-x^2}\,dx\,.\]
Note that $f(y):=ye^{-y^2/4}\leq f(\sqrt2)=\sqrt{2/e}<1$ for $y\geq0$. 
Thus 
\[K_k(t)<2\pi^{-1/2}\int_0^{+\infty}\big|\log x\big|^\nu e^{-x^2}\,dx\,.\]
Lemma \ref{L:ss} together with the above estimates on $I_k$ and $K_k(t)$ implies that
\[J_k(u)\leq2\pi^{-1/2}\big(2^{2\nu-1}+
2^{\nu-1}\big)\int_0^{+\infty}\big|\log x\big|^\nu e^{-x^2}\,dx+2^\nu
\leq2^{2\nu}\int_0^{+\infty}\big|\log x\big|^\nu e^{-x^2}\,dx+2^\nu\,.\]
\end{proof}

\subsection{Radial densities}\label{SS:rd}
Let  $\sigma_k$ be the measure on $\R^k\subset\C^k$ with density 
\begin{equation}\label{e:rd}
d\sigma_k(a)=\frac{\Gamma(\frac{k}{2}+\alpha)}{\Gamma(\alpha)
\pi^{\frac{k}{2}}}\,(1+|a|^2)^{-\frac{k}{2}-\alpha}\,d\lambda_k(a)\,,
\end{equation}
where $\alpha>0$ is fixed, $a=(a_1,\ldots,a_k)\in\R^k$, and $\Gamma$ 
is the Euler Gamma function.

\begin{Proposition}\label{P:rd} The measure $\sigma_k$ is a probability measure. 
For every integer $k\geq1$, every $\nu\geq1$, and every $u\in\C^k$ with $|u|=1$, 
we have 
\[J_k(u;\sigma_k,\nu)=
\int_{\R^k}\big|\log|\langle a,u\rangle|\big|^\nu\,d\sigma_k(a)
\leq\Gamma_{\alpha,\nu}\,,\]
where $\Gamma_{\alpha,\nu}>0$ 
is a constant depending only on $\alpha$ and $\nu$.
\end{Proposition}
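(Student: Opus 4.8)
The plan is to reduce Proposition \ref{P:rd} to Lemma \ref{L:ss} exactly as in the Gaussian case (Proposition \ref{P:Gauss}), since the density \eqref{e:rd} is rotation invariant. First I would verify that $\sigma_k$ is a probability measure: by passing to polar coordinates in $\R^k$, the total mass is
\[
\frac{\Gamma(\frac{k}{2}+\alpha)}{\Gamma(\alpha)\pi^{k/2}}\,\omega_{k-1}\int_0^{+\infty}(1+r^2)^{-\frac{k}{2}-\alpha}r^{k-1}\,dr\,,
\]
where $\omega_{k-1}$ is the surface area of the unit sphere in $\R^k$; the substitution $u=r^2/(1+r^2)$ turns this into a Beta integral $B(\frac{k}{2},\alpha)=\Gamma(\frac{k}{2})\Gamma(\alpha)/\Gamma(\frac{k}{2}+\alpha)$, and together with $\omega_{k-1}=2\pi^{k/2}/\Gamma(\frac{k}{2})$ everything cancels to give mass $1$.

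Next I would write $u=s+it$ with $s,t\in\R^k$, $|u|=1$, and assume $|s|\geq|t|$, so that $|s|\geq 1/\sqrt2$. By Lemma \ref{L:ss} it suffices to bound $I_k=I_k(\sigma_k,\nu)$ and $K_k(t)=K_k(t;\sigma_k,\nu)$ by a constant depending only on $\alpha,\nu$. Because $\sigma_k$ is rotation invariant, Fubini reduces $I_k$ to a one-dimensional integral: the marginal of the first coordinate $a_1$ has a density proportional to $(1+a_1^2)^{-\alpha-1/2}$ (again by a Beta-integral computation over the remaining $k-1$ coordinates), so
\[
I_k=c_\alpha\int_{-\infty}^{+\infty}\big|\log|x|\big|^\nu(1+x^2)^{-\alpha-\frac12}\,dx\,,
\]
which is finite: near $x=0$ the factor $\big|\log|x|\big|^\nu$ is integrable, and as $|x|\to\infty$ the polynomial decay $(1+x^2)^{-\alpha-1/2}$ beats the logarithmic growth. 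Crucially this bound is independent of $k$. For $K_k(t)$ I would again use rotation invariance to reduce to the marginal of $a_1$ and the scaling variable $|t|$; writing $r=|a_1|\,|t|$, the integral over $\{|a_1||t|>1/\sqrt2\}$ becomes a one-dimensional integral in $r$ against the pushforward of the marginal density, and I would bound it uniformly using $|t|\leq 1/\sqrt2$.

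The main obstacle, and the step that differs genuinely from the Gaussian case, is controlling $K_k(t)$ uniformly in both $k$ and the magnitude $|t|$, because here the tail decay is only polynomial rather than Gaussian. In Proposition \ref{P:Gauss} the factor $e^{-1/(4|t|^2)}$ supplied a decaying weight that tamed the $|t|^{-1}$ blowup; no such exponential gain is available for \eqref{e:rd}. I expect the resolution to be that the heavy polynomial tail actually cuts off the region $\{|a_1||t|>1/\sqrt2\}$ from the other side: when $|t|$ is small this region forces $|a_1|$ large, where the density $(1+a_1^2)^{-\alpha-1/2}$ is tiny, and a direct estimate — substituting $r=|a_1|\,|t|$ so that $(1+a_1^2)^{-\alpha-1/2}\leq (1+r^2/|t|^2)^{-\alpha-1/2}\leq |t|^{2\alpha+1}(|t|^2+r^2)^{-\alpha-1/2}$ — should yield a bound of the form $C_{\alpha,\nu}|t|^{2\alpha}$ times a convergent integral, hence a uniform constant (in fact a vanishing one as $|t|\to0$). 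Assembling $I_k$ and $K_k(t)$ into Lemma \ref{L:ss} then produces the desired constant $\Gamma_{\alpha,\nu}$ depending only on $\alpha$ and $\nu$.
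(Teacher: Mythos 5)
Your proposal is correct and follows essentially the same route as the paper: both verify the total mass via the Beta integral, reduce to Lemma \ref{L:ss} by rotation invariance, compute the first-coordinate marginal density $c_\alpha(1+y^2)^{-\alpha-\frac12}$ (independent of $k$), and control $K_k(t)$ by the substitution $x=y|t|$ together with the bound $(|t|^2+x^2)^{-\alpha-\frac12}\leq x^{-2\alpha-1}$, yielding a factor $|t|^{2\alpha}\leq1$ and a convergent integral. The key observation you identify — that the polynomial tail is compensated by the gain $|t|^{2\alpha}$ after rescaling, replacing the exponential gain of the Gaussian case — is exactly the mechanism used in the paper's proof.
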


\begin{proof} Recall that the area of the unit sphere ${\mathbf S}^{k-1}\subset\R^k$ is 
\begin{equation}\label{e:areaSk}
s_k=\frac{2\pi^{\frac{k}{2}}}{\Gamma(\frac{k}{2})}\,.
\end{equation}
Using spherical coordinates on $\R^k$ and changing variables $r^2=x/(1-x)$, we obtain 
\[\begin{split}
\int_{\R^k}\frac{d\lambda_k(a)}{(1+|a|^2)^{\frac{k}{2}+\alpha}}&=
s_k\int_0^{+\infty}\frac{r^{k-1}\,dr}{(1+r^2)^{\frac{k}{2}+\alpha}}=
\frac{s_k}{2}\int_0^1x^{\frac{k}{2}-1}(1-x)^{\alpha-1}\,dx \\
&=\frac{\pi^{\frac{k}{2}}}{\Gamma(\frac{k}{2})}\,B(k/2,\alpha)=
\frac{\pi^{k/2}\Gamma(\alpha)}{\Gamma(\frac{k}{2}+\alpha)}\,.
\end{split}\]
Hence $\int_{\R^k}d\sigma_k(a)=1$. 

\smallskip

Let us consider the following function which will be useful 
in estimating the integrals $I_k$ and $K_k$:
\[F_{k,\alpha}(y):=
s_{k-1}\int_0^{+\infty}\frac{r^{k-2}\,dr}{(1+y^2+r^2)^{\frac{k}{2}+\alpha}}
\,,\,\;y\in\R\,.\]
Changing variables $r^2=(1+y^2)\frac{x}{1-x}$ we get 
\[F_{k,\alpha}(y)=
\frac{s_{k-1}}{2}\,B\left(\frac{k-1}{2},\alpha+
\frac{1}{2}\right)(1+y^2)^{-\alpha-\frac{1}{2}}=
\frac{\pi^{\frac{k-1}{2}}\Gamma(\alpha+\frac{1}{2})}
{\Gamma(\frac{k}{2}+\alpha)(1+y^2)^{\alpha+\frac{1}{2}}}\,.\]
We next evaluate $I_k$ by using spherical coordinates 
for $(a_2,\ldots,a_k)\in\R^{k-1}$. We have
\[\begin{split}
I_k&=\int_{\R^k}\big|\log|a_1|\big|^\nu\,d\sigma_k(a)=
\frac{\Gamma(\frac{k}{2}+\alpha)}{\Gamma(\alpha)\pi^{\frac{k}{2}}}\,
\int_{-\infty}^{+\infty}\int_0^{+\infty}
\frac{s_{k-1}r^{k-2}\,\big|\log|y|\big|^\nu}
{(1+y^2+r^2)^{\frac{k}{2}+\alpha}}\,\;dr\,dy\\
&=2\,\frac{\Gamma(\frac{k}{2}+\alpha)}
{\Gamma(\alpha)\pi^{\frac{k}{2}}}\,\int_0^{+\infty}\big|\log|y|\big|^\nu 
F_{k,\alpha}(y)\,dy=\frac{2\Gamma(\alpha+\frac{1}{2})}
{\sqrt\pi\,\Gamma(\alpha)}\,\int_0^{+\infty}\frac{\big|\log|y|\big|^\nu}
{(1+y^2)^{\alpha+\frac{1}{2}}}\,dy=:C_{\alpha,\nu}\,.
\end{split}\]
We now write $u=s+it$, $s,t\in\R^k$, 
and assume without loss of generality that $|s|\geq|t|$. 
We estimate $K_k$ by the same method used for $I_k$:
\[\begin{split}
K_k(t)&=\int_{\{a\in{\mathbb R}^k:\,|a_1||t|>1/\sqrt2\}}
\big|\log(|a_1||t|)\big|^\nu\,d\sigma_k(a)\\
&=\frac{\Gamma(\frac{k}{2}+\alpha)}
{\Gamma(\alpha)\pi^{\frac{k}{2}}}\,\int_{\{y\in\R:\,|y||t|>1/\sqrt2\}}
\int_0^{+\infty}\frac{s_{k-1}r^{k-2}\,\big|\log(|y||t|)\big|^\nu}
{(1+y^2+r^2)^{\frac{k}{2}+\alpha}}\,\;dr\,dy\\
&=2\,\frac{\Gamma(\frac{k}{2}+\alpha)}{\Gamma(\alpha)\pi^{\frac{k}{2}}}\,
\int_{\frac{1}{|t|\sqrt2}}^{+\infty}\big|\log(y|t|)\big|^\nu F_{k,\alpha}(y)\,dy=
\frac{2\Gamma(\alpha+\frac{1}{2})}{\sqrt\pi\,\Gamma(\alpha)}\,
\int_{\frac{1}{|t|\sqrt2}}^{+\infty}
\frac{\big|\log(y|t|)\big|^\nu}{(1+y^2)^{\alpha+\frac{1}{2}}}\,dy\,.
\end{split}\]
Since $|t|\leq1$, making the substitution $x=y|t|$ yields
\[K_k(t)=\frac{2\Gamma(\alpha+\frac{1}{2})}
{\sqrt\pi\,\Gamma(\alpha)}\,|t|^{2\alpha}
\int_{\frac{1}{\sqrt2}}^{+\infty}
\frac{\big|\log x\big|^\nu}{(|t|^2+x^2)^{\alpha+\frac{1}{2}}}\,dx
\leq
\frac{2\Gamma(\alpha+\frac{1}{2})}{\sqrt\pi\,\Gamma(\alpha)}\,
\int_{\frac{1}{\sqrt2}}^{+\infty}\frac{\big|\log x\big|^\nu}{x^{2\alpha+1}}\,dx
=:C'_{\alpha,\nu}\,.\]
Lemma \ref{L:ss} together with the above estimates on $I_k$ and $K_k(t)$ 
implies that
\[J_k(u)\leq2^{2\nu-1}C_{\alpha,\nu}+2^{\nu-1}C'_{\alpha,\nu}+
2^\nu=:\Gamma_{\alpha,\nu}\,,\]
which concludes the proof.
\end{proof}

\subsection{Area measure of real spheres}\label{SS:rs}
Let ${\mathcal A}_k$ be the surface measure on the unit sphere 
${\mathbf S}^{k-1}\subset\R^k\subset{\mathbb C}^k$, 
so ${\mathcal A}_k\big({\mathbf S}^{k-1}\big)=s_k$, where $s_k$ is given in \eqref{e:areaSk}, and let

\begin{equation}\label{e:sphere}
\sigma_k=\frac{1}{s_k}\,{\mathcal A}_k\,.
\end{equation}

\begin{Proposition}\label{P:sphere} For every integer $k\geq2$, every $\nu\geq1$, and every $u\in\C^k$ with $|u|=1$, we have 
\[J_k(u;\sigma_k,\nu)=\int_{{\mathbf S}^{k-1}}\big|\log|\langle a,u\rangle|\big|^\nu\,d\sigma_k(a)
\leq M_\nu\,(\log k)^\nu\,,\]
where $M_\nu>0$ is a constant depending only on $\nu$.
\end{Proposition}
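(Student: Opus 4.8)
The plan is to follow the template of Propositions \ref{P:Gauss} and \ref{P:rd}: write $u=s+it$ with $s,t\in\R^k$, assume without loss of generality that $|s|\geq|t|$ (so that $|t|\leq 1/\sqrt2$ since $|u|=1$), and invoke Lemma \ref{L:ss}, which applies because $\sigma_k$ is rotation invariant. This gives $J_k(u)\leq 2^{2\nu-1}I_k+2^{\nu-1}K_k(t)+2^\nu$, so everything reduces to controlling $I_k$ and $K_k(t)$ for the sphere measure.

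The first simplification is that $K_k(t)$ vanishes. Indeed, on ${\mathbf S}^{k-1}$ we have $|a_1|\leq1$, while $|t|\leq 1/\sqrt2$; hence $|a_1||t|\leq 1/\sqrt2$ everywhere on $\supp\sigma_k$, so the domain $\{|a_1||t|>1/\sqrt2\}$ defining $K_k(t)$ is $\sigma_k$-null and $K_k(t)=0$. Thus it suffices to prove $I_k\leq M_\nu^{(1)}(\log k)^\nu$, after which Lemma \ref{L:ss} yields $J_k(u)\leq 2^{2\nu-1}M_\nu^{(1)}(\log k)^\nu+2^\nu\leq M_\nu(\log k)^\nu$ for $k\geq2$, the additive $2^\nu$ being absorbed because $(\log k)^\nu\geq(\log 2)^\nu>0$.

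For $I_k$ I would use the explicit marginal law of the first coordinate of a uniform point on ${\mathbf S}^{k-1}$, whose density on $[-1,1]$ is $c_k(1-x^2)^{(k-3)/2}$ with $c_k=\Gamma(k/2)/\big(\sqrt\pi\,\Gamma(\tfrac{k-1}{2})\big)$; this gives $I_k=2c_k\int_0^1\big|\log x\big|^\nu(1-x^2)^{(k-3)/2}\,dx$. The essential point is that this density concentrates at scale $x\sim 1/\sqrt k$, where $|\log x|\approx\tfrac12\log k$, which is exactly the source of the claimed $(\log k)^\nu$ growth. To make this quantitative I would split the integral at $x_0=1/\sqrt k$. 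On $(0,x_0)$, for $k\geq3$ one has $(1-x^2)^{(k-3)/2}\leq1$, and the substitution $x=e^{-r}$ gives $\int_0^{x_0}\big|\log x\big|^\nu\,dx=\int_{(\log k)/2}^{+\infty}r^\nu e^{-r}\,dr$, a tail of the Gamma integral bounded by $C_\nu(\log k)^\nu k^{-1/2}$ once $\tfrac12\log k\geq\nu$. On $(x_0,1)$ one bounds $\big|\log x\big|^\nu\leq 2^{-\nu}(\log k)^\nu$ together with $\int_{x_0}^1(1-x^2)^{(k-3)/2}\,dx\leq\tfrac12 B(\tfrac12,\tfrac{k-1}{2})=1/(2c_k)$. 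Multiplying through by $2c_k$ and using the Gamma-ratio asymptotic $c_k=O(\sqrt k)$ (so that $c_k/\sqrt k$ stays bounded) yields $I_k\leq M_\nu^{(1)}(\log k)^\nu$ for all large $k$; the finitely many remaining values of $k$ are harmless, since each $I_k$ is then a finite constant and $(\log k)^\nu$ is bounded below for $k\geq2$.

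The main obstacle is the estimate of $I_k$: one must capture the $1/\sqrt k$ concentration of a single coordinate quantitatively and verify that the contribution of the region $x\lesssim 1/\sqrt k$, where $\big|\log x\big|^\nu$ blows up, is tamed by the small measure of that region. This is precisely where the split at $1/\sqrt k$ and the incomplete-Gamma tail bound do the work, and where the factor $(\log k)^\nu$ emerges. By contrast, the vanishing of $K_k(t)$ and the reduction through Lemma \ref{L:ss} are routine.
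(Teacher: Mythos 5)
Your proposal is correct and follows essentially the same route as the paper: reduce via Lemma \ref{L:ss} to the single-coordinate moment $I_k$ (the paper invokes the unit-ball case of that lemma directly, which is equivalent to your observation that $K_k(t)=0$ on the sphere), rewrite $I_k$ as a normalized one-dimensional integral $\int_0^1(-\log x)^\nu(1-x^2)^{(k-3)/2}\,dx$ with normalizing constant of order $\sqrt k$, and split at a small cutoff to extract the $(\log k)^\nu$ growth. The only differences are cosmetic: the paper cuts at $\tau=1/k$ and uses the elementary bound $(-\log x)^\nu\sqrt x\leq(2\nu/e)^\nu$ (its Lemma \ref{L:int}), which avoids both your incomplete-Gamma tail estimate and the separate treatment of small $k$.
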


\begin{proof}  We let $k\geq3$ and consider spherical coordinates on ${\mathbf S}^{k-1}$ such that 
\begin{align*}
&(\theta_1,\ldots,\theta_{k-2},\varphi)\in\left[-\frac{\pi}{2}\,,\frac{\pi}{2}\right]^{k-2}\times[0,2\pi]\,,\\
&a_1=\cos\varphi\,\cos\theta_1\,\ldots\,\cos\theta_{k-2}\,,\\
&a_2=\sin\varphi\,\cos\theta_1\,\ldots\,\cos\theta_{k-2}\,,\\
&a_3=\sin\theta_1\,\cos\theta_2\,\ldots\,\cos\theta_{k-2}\,,\\
& \hspace{17mm} \ldots\\
&a_k=\sin\theta_{k-2}\,,\\
&d{\mathcal A}_k=\cos\theta_1\,\cos^2\theta_2\,\ldots\,\cos^{k-2}\theta_{k-2}\;
d\theta_1\ldots d\theta_{k-2}d\varphi\,.
\end{align*}

Lemma \ref{L:ss} implies that 
\[J_k(u)\leq2^{\nu-1}\int_{{\mathbf S}^{k-1}}\big|\log|a_k|\big|^\nu\,d\sigma_k(a)+1\,.\]
Using spherical coordinates we obtain
\[I_k:=\int_{{\mathbf S}^{k-1}}\big|\log|a_k|\big|^\nu\,d\sigma_k(a)=\frac{\int_{{\mathbf S}^{k-1}}\big|\log|a_k|\big|^\nu\,d{\mathcal A}_k(a)}{\int_{{\mathbf S}^{k-1}}\,d{\mathcal A}_k(a)}=\frac{I'_k}{C_{k-2}}\,,\]
where 
\[I'_k:=\int_0^{\pi/2}\big|\log\sin t\big|^\nu\cos^{k-2}t\,dt=\int_0^1\big(-\log x\big)^\nu(1-x^2)^{\frac{k-3}{2}}\,dx\,,\,\;C_k:=\int_0^{\pi/2}\cos^kt\,dt\,.\]

One has that there exist constants $A,B>0$ such that $A\leq C_k\sqrt{k}\leq B$ for all $k\geq1$ (see e.g. the proof of \cite[Lemma 4.3]{CMM}). Lemma \ref{L:int} applied with $\tau=1/k$ and $b=\frac{k-3}{2}$ yields that
\[I'_k\leq \frac{3(\log k)^\nu+2^{\nu+1}\nu^\nu e^{-\nu}}{\sqrt k}\,.\]
Hence $I_k\leq A^{-1}\left(3(\log k)^\nu+2^{\nu+1}\nu^\nu e^{-\nu}\right)$, and the proof is complete. 
\end{proof}

\begin{Lemma}\label{L:int}
If $\nu\geq1$, $b\geq0$, and $0<\tau<1$, then
\[\int_0^1\big(-\log x\big)^\nu(1-x^2)^b\,dx\leq2^{\nu+1}\left(\frac{\nu}{e}\right)^\nu\sqrt\tau\,+2\big(-\log\tau\big)^\nu\left(b+\frac{3}{2}\right)^{-\frac{1}{2}}\,.\]
\end{Lemma}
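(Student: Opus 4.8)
The plan is to prove the integral estimate
\[
\int_0^1\big(-\log x\big)^\nu(1-x^2)^b\,dx\leq
2^{\nu+1}\left(\frac{\nu}{e}\right)^\nu\sqrt\tau\,
+2\big(-\log\tau\big)^\nu\left(b+\tfrac{3}{2}\right)^{-\frac12}
\]
by splitting the interval of integration at the free parameter $\tau\in(0,1)$, handling the region near $0$ (where $-\log x$ is large but $(1-x^2)^b$ is close to $1$) and the region near $1$ (where $-\log x$ is small but $(1-x^2)^b$ decays) by entirely different methods. So I would write $\int_0^1=\int_0^\tau+\int_\tau^1$ and estimate each piece separately.

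For the piece $\int_0^\tau(-\log x)^\nu(1-x^2)^b\,dx$, since $0\le(1-x^2)^b\le1$ on $[0,1]$ I would first bound $(1-x^2)^b\le1$, reducing the task to $\int_0^\tau(-\log x)^\nu\,dx$. The standard trick here is to dominate the logarithmic factor by a power: for a suitable exponent one has $(-\log x)^\nu\le C x^{-1/2}$ near $0$. Concretely, using the elementary inequality $-\log x\le \frac{\nu}{e}\,x^{-1/\nu}$ (which follows from maximizing $x^{1/\nu}\log(1/x)$, whose maximum is $\nu/e$ attained at $x=e^{-\nu}$), I get $(-\log x)^\nu\le(\nu/e)^\nu x^{-1}$... but that is not integrable, so instead I would aim for $(-\log x)^\nu\le(2\nu/e)^\nu x^{-1/2}$ via $-\log x\le\frac{2\nu}{e}x^{-1/(2\nu)}$. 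Then $\int_0^\tau(-\log x)^\nu\,dx\le(2\nu/e)^\nu\int_0^\tau x^{-1/2}\,dx=(2\nu/e)^\nu\cdot2\sqrt\tau=2^{\nu+1}(\nu/e)^\nu\sqrt\tau$, matching the first term exactly.

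For the piece $\int_\tau^1(-\log x)^\nu(1-x^2)^b\,dx$, on $[\tau,1]$ the function $-\log x$ is decreasing, so $(-\log x)^\nu\le(-\log\tau)^\nu$; pulling this constant out leaves $(-\log\tau)^\nu\int_\tau^1(1-x^2)^b\,dx\le(-\log\tau)^\nu\int_0^1(1-x^2)^b\,dx$. The remaining integral is a Beta integral: substituting $x^2=s$ gives $\int_0^1(1-x^2)^b\,dx=\tfrac12 B(\tfrac12,b+1)=\tfrac12\frac{\Gamma(1/2)\Gamma(b+1)}{\Gamma(b+3/2)}$. The point I need is the bound $\tfrac12 B(\tfrac12,b+1)\le(b+\tfrac32)^{-1/2}$, which I expect to be the main obstacle: it requires controlling the ratio $\Gamma(b+1)/\Gamma(b+3/2)$, and the clean way is to invoke (or reprove) the standard estimate $\Gamma(b+1)/\Gamma(b+3/2)\le(b+\tfrac32)^{-1/2}\cdot\frac{2}{\sqrt\pi}$ coming from log-convexity of $\Gamma$ (Gautschi-type inequalities), after which $\tfrac12\sqrt\pi\cdot\frac{2}{\sqrt\pi}(b+\tfrac32)^{-1/2}=(b+\tfrac32)^{-1/2}$. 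This produces the second term $2(-\log\tau)^\nu(b+\tfrac32)^{-1/2}$ with the factor $2$ absorbing any slack. Adding the two pieces gives the claimed bound. The only genuinely delicate point is the sharp Gamma-ratio estimate; everything else is the interval split plus the two elementary domination inequalities.
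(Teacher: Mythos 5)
Your proposal is correct and, for the decomposition and the first piece, is essentially identical to the paper's proof: the split $\int_0^1=\int_0^\tau+\int_\tau^1$, the bound $(1-x^2)^b\le1$ on $[0,\tau]$ combined with $(-\log x)^\nu\le(2\nu/e)^\nu x^{-1/2}$ (the paper maximizes $(-\log x)^\nu\sqrt{x}$ at $x=e^{-2\nu}$, which is your inequality raised to the $\nu$-th power), and the bound $(-\log x)^\nu\le(-\log\tau)^\nu$ on $[\tau,1]$ followed by the Beta evaluation $\int_0^1(1-x^2)^b\,dx=\tfrac{1}{2}B(\tfrac{1}{2},b+1)=\sqrt{\pi}\,\Gamma(b+1)/\bigl(2\Gamma(b+\tfrac{3}{2})\bigr)$. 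The only divergence is the final Gamma-ratio estimate: the paper uses Stirling's formula with explicit error term and a monotonicity argument to get $\int_\tau^1(1-x^2)^b\,dx\le\sqrt{\pi e/6}\;e^{1/12}\,(b+\tfrac{3}{2})^{-1/2}<2(b+\tfrac{3}{2})^{-1/2}$, whereas you propose a Gautschi-type bound. One caveat here: the specific inequality you quote, $\Gamma(b+1)/\Gamma(b+\tfrac{3}{2})\le\tfrac{2}{\sqrt\pi}(b+\tfrac{3}{2})^{-1/2}$, is false for small $b$ (at $b=0$ the left side equals $2/\sqrt\pi\approx1.13$ while the right side is $\approx0.92$). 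The correct log-convexity consequence is $\Gamma(b+1)^2\le\Gamma(b+\tfrac{1}{2})\Gamma(b+\tfrac{3}{2})=\Gamma(b+\tfrac{3}{2})^2/(b+\tfrac{1}{2})$, i.e.\ $\Gamma(b+1)/\Gamma(b+\tfrac{3}{2})\le(b+\tfrac{1}{2})^{-1/2}$; combined with $\tfrac{\sqrt\pi}{2}(b+\tfrac{1}{2})^{-1/2}\le2(b+\tfrac{3}{2})^{-1/2}$, which is equivalent to $(b+\tfrac{3}{2})/(b+\tfrac{1}{2})\le16/\pi$ and holds since the left side is at most $3$, this yields exactly the second term --- so the factor $2$ does absorb the slack, as you anticipated. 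With that correction your argument is complete, and this route is arguably more elementary than the paper's Stirling computation.
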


\begin{proof} We have 
\[\int_0^1\big(-\log x\big)^\nu(1-x^2)^b\,dx\leq\int_0^\tau\big(-\log x\big)^\nu\,dx+\big(-\log\tau\big)^\nu\int_\tau^1(1-x^2)^b\,dx\,.\]
Since $f(x):=\big(-\log x\big)^\nu\sqrt x\leq f(e^{-2\nu})=(2\nu/e)^\nu$ for $0<x\leq1$, we obtain
\[\int_0^\tau\big(-\log x\big)^\nu\,dx\leq\left(\frac{2\nu}{e}\right)^\nu\int_0^\tau x^{-\frac{1}{2}}\,dx=2^{\nu+1}\left(\frac{\nu}{e}\right)^\nu\sqrt\tau\,.\]
Moreover
\[\int_\tau^1(1-x^2)^b\,dx\leq\int_0^1(1-x^2)^b\,dx=\frac{1}{2}\,\int_0^1\frac{(1-y)^b}{\sqrt y}\,dy=\frac{1}{2}\,B\left(\frac{1}{2}\,,b+1\right)=\frac{\sqrt\pi\,\Gamma(b+1)}{2\Gamma\left(b+\frac{3}{2}\right)}\,.\]
Stirling's formula
\[\Gamma(x)=\sqrt{\frac{2\pi}{x}}\,\left(\frac{x}{e}\right)^xe^{\mu(x)}\,,\,\;0<\mu(x)<\frac{1}{12x}\,,\]
where $x>0$, yields that 
\[\frac{\sqrt\pi\,\Gamma(b+1)}{2\Gamma\left(b+\frac{3}{2}\right)}\leq\frac{\sqrt{\pi e}}{2}\,
\left(\frac{b+\frac{3}{2}}{b+1}\right)^{\frac{1}{2}}\left(\frac{b+1}{b+\frac{3}{2}}\right)^{b+1}\,\left(b+\frac{3}{2}\right)^{-\frac{1}{2}}\,e^{\frac{1}{12(b+1)}}\,.\]
Since the function $y\mapsto\big(y(y+\frac{1}{2})^{-1}\big)^y$ is decreasing for $y>0$, it follows that 
\[\int_\tau^1(1-x^2)^b\,dx\leq\sqrt{\frac{\pi e}{6}}\;e^{\frac{1}{12}}\left(b+\frac{3}{2}\right)^{-\frac{1}{2}}<2\left(b+\frac{3}{2}\right)^{-\frac{1}{2}}\,.\]
The lemma now follows.
\end{proof}

Note that, by Proposition \ref{P:sphere}, the hypotheses of Theorem \ref{th1} for the measures $\sigma_{d_p}$ in \eqref{e:sphere} involve the quantity $\displaystyle\frac{\log d_p}{A_p}$. General examples of sequences of line bundles $L_p$ for which $\displaystyle\lim_{p\to\infty}\frac{\log d_p}{A_p}=0$ are given in \cite[Proposition 4.4]{CMM} and \cite[Proposition 4.5]{CMM} (see also \cite[Section 4.2.3]{BCM}).

\subsection{Random holomorphic sections with i.i.d.\ real coefficients}\label{SS:riid}
Let $\sigma_k$ be the measure on $\R^k\subset\C^k$ with density 
\begin{equation}\label{e:riid}
d\sigma_k(a)=\phi(a_1)\ldots\phi(a_k)\,d\lambda_k(a)\,,
\end{equation}
where $a=(a_1,\ldots,a_k)\in\R^k$ and $\phi:\R\to[0,M]$ is a function such that $\displaystyle\int_{-\infty}^{+\infty}\phi(x)\,dx=1$. Note that this measure is in general not rotation invariant. 

Endowing the Bergman space $H^0_{(2)}(X,L_p)$ of holomorphic sections with the measures $\sigma_{d_p}$ from \eqref{e:riid} means the following: if $(S_j^p)_{j=1}^{d_p}$ is a fixed orthonormal basis of $H^0_{(2)}(X,L_p)$, then random holomorphic sections are of the form 
\[s_p=\sum_{j=1}^{d_p}a_j^pS_j^p\,,\]
where the coefficients $\{a_j^p\}_{j=1}^{d_p}$ are i.i.d.\ real random variables with density function $\phi$.

\begin{Proposition}\label{P:riid} Let $\sigma_k$ be the measure on $\R^k\subset\C^k$ defined in \eqref{e:riid} and assume that there exist $c>0$, $\rho>1$, such that  
\begin{equation}\label{tailc} 
\int_{\big\{x\in\R:\,|x|>e^R\big\}}\phi(x)\,dx\leq cR^{-\rho} \,,\,\;\forall\,R>0.
\end{equation}
For every integer $k\geq1$, every $1\leq\nu<\rho$, and every $u\in\C^k$ with $|u|=1$, we have 
\[J_k(u;\sigma_k,\nu)=\int_{\R^k}\big|\log|\langle a,u\rangle|\big|^\nu\,d\sigma_k(a)
\leq \Gamma k^{\nu/\rho}\,,\]
where $\Gamma=\Gamma(M,c,\rho,\nu)>0$.
\end{Proposition}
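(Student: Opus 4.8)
The plan is to estimate the logarithmic moment by splitting $\log|\langle a,u\rangle|$ into its positive and negative parts. Writing $\log^-|x|:=\max(-\log|x|,0)$, one has the pointwise identity $\big|\log|\langle a,u\rangle|\big|^\nu=(\log^+|\langle a,u\rangle|)^\nu+(\log^-|\langle a,u\rangle|)^\nu$ (the two parts have disjoint support), so $J_k(u)=\E\big[(\log^+|\langle a,u\rangle|)^\nu\big]+\E\big[(\log^-|\langle a,u\rangle|)^\nu\big]$, and I would handle each summand by a layer-cake (distribution-function) integral. As in Lemma \ref{L:ss} I write $u=s+it$ with $s,t\in\R^k$ and assume $|s|\ge|t|$, so $|s|\ge 1/\sqrt2$. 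The two elementary inequalities driving the argument are $|\langle a,u\rangle|\le|a|$ (Cauchy--Schwarz, since $|u|=1$) for the upper tail, and $|\langle a,u\rangle|\ge|\langle a,s\rangle|$ for the lower tail.

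For the large-value part I would use $|\langle a,u\rangle|\le|a|$ to reduce to $\E[(\log^+|a|)^\nu]=\nu\int_0^\infty R^{\nu-1}\Prob(|a|>e^R)\,dR$. Since $|a|^2=\sum a_j^2$, a union bound gives $\Prob(|a|>e^R)\le k\,\Prob(|a_1|>e^R/\sqrt k)$, and for $R\ge\log k$ one has $e^R/\sqrt k=e^{R-\frac12\log k}$ with $R-\frac12\log k\ge R/2$, so the tail hypothesis \eqref{tailc} yields $\Prob(|a|>e^R)\le c\,2^\rho k\,R^{-\rho}$. Splitting the integral at a cutoff $R_0\ge\log k$ and using $\Prob\le1$ below $R_0$ gives $\E[(\log^+|a|)^\nu]\le R_0^\nu+\frac{\nu c2^\rho}{\rho-\nu}\,k\,R_0^{\nu-\rho}$; here $\nu<\rho$ is exactly what makes the tail integral converge. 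The balancing choice $R_0=\max(\log k,k^{1/\rho})$ renders both terms of order $k^{\nu/\rho}$, which is the target order for this part.

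For the small-ball part the measure is not rotation invariant, so Lemma \ref{L:ss} is unavailable and I would exploit the bounded density $\phi\le M$ directly. From $|\langle a,u\rangle|\ge|\langle a,s\rangle|$ it suffices to bound $\E[(\log^-|\langle a,s\rangle|)^\nu]$. Writing $\langle a,s\rangle=s_{j_0}a_{j_0}+\sum_{j\ne j_0}s_ja_j$ with $j_0$ the index of the largest $|s_j|$, the density of $\langle a,s\rangle$ is at most $M/|s_{j_0}|\le M\sqrt{2k}$ (scale the bounded density of $a_{j_0}$, then convolve with the independent remainder, using $\max_j|s_j|\ge|s|/\sqrt k\ge1/\sqrt{2k}$). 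Hence $\Prob(|\langle a,s\rangle|<e^{-R})\le\min\!\big(1,\,2M\sqrt{2k}\,e^{-R}\big)$, and the layer-cake integral $\nu\int_0^\infty R^{\nu-1}\min(1,2M\sqrt{2k}\,e^{-R})\,dR$, split at the crossover $R=\log(2M\sqrt{2k})$, is $O((\log k)^\nu)$. Since $(\log k)^\nu\le C\,k^{\nu/\rho}$ for all $k\ge1$, this contribution is again $\le\Gamma k^{\nu/\rho}$, and combining the two parts (absorbing finitely many small $k$ into the constant) completes the proof.

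I expect the small-ball step to be the delicate one. A naive estimate $\int_{|y|<1}(-\log|y|)^\nu f(y)\,dy\le\|f\|_\infty\int_{-1}^1(-\log|y|)^\nu\,dy$ gives only $O(\sqrt k)$, which fails the target once $\nu/\rho<1/2$; the gain comes from truncating with $\Prob\le1$ in the layer-cake, which replaces $\sqrt k$ by $(\log k)^\nu$. The crucial quantitative input is therefore the sharp density bound $M/\max_j|s_j|$ together with $\max_j|s_j|\ge|s|/\sqrt k$. The secondary bookkeeping point is the optimization $R_0=k^{1/\rho}$ in the upper tail and the verification that both resulting terms are exactly of order $k^{\nu/\rho}$.
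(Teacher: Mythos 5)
Your proof is correct and follows essentially the same route as the paper's: a coordinate-wise union bound combined with the tail hypothesis \eqref{tailc} for the upper tail, the inequality $|\langle a,u\rangle|\ge|\langle a,s\rangle|$ together with the density bound $M/\max_j|s_j|\le M\sqrt{2k}$ for the small-ball probability, and a layer-cake integral with cutoff optimized at $R_0\sim k^{1/\rho}$. The only cosmetic differences are that you split $\big|\log|\langle a,u\rangle|\big|^\nu$ into its $\log^+$ and $\log^-$ parts with separate cutoffs, and that your choice $R_0=\max(\log k,k^{1/\rho})$ handles the constraint $R_0\ge\log k$ a bit more carefully than the paper's $R_0^\rho=k$.
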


\begin{proof} We follow the proof of \cite[Lemma 4.13]{BCM}, making the necessary modifications since $\sigma_k$ is supported on $\R^k$ and $u\in\C^k$. Since  
\[\left\{a\in\R^k:\,|\langle a,u\rangle|>e^R\right\}\subset \bigcup_{j=1}^k\big\{a_j\in\R:\,|a_j|>e^{R-\frac12\log k}\big\},\]
for $R\geq\log k$ we have by (\ref{tailc}),
\begin{equation}\label{ues}
\sigma_k\big(\{a\in\R^k:\,\log|\langle a,u\rangle|>R\}\big)\leq 
k\,\int_{\big\{a_j\in\R:\,|a_j|>e^{R-\frac12\log k}\big\}}\phi(x)\,dx\leq 
\frac{2^\rho ck}{R^\rho}\,\cdot
\end{equation}

We write $u=s+it$, where $s,t\in\R^k$. Since $1=|u|^2=|s|^2+|t|^2$ we may assume that $|s_1|\geq(2k)^{-1/2}$, where $s=(s_1,\ldots,s_k)$. As $|\langle a,s\rangle|\leq|\langle a,u\rangle|$ and $\phi\leq M$, we have 
\begin{equation}\label{les} 
\begin{split}
\sigma_k\big(\{a\in&\R^k:\,\log|\langle a,u\rangle|<-R\}\big)\leq\sigma_k\big(\{a\in\R^k:\,\log|\langle a,s\rangle|<-R\}\big)\\ 
& =\frac{1}{|s_1|}\int_{\R^{k-1}}\int_{\{|x_1|<e^{-R}\}} 
\phi\left(\frac{x_1-\sum_{j=2}^ks_jx_j}{s_1}\right)\phi(x_2)\ldots\phi(x_k)\,dx_1\ldots dx_k\\
&\leq 2\sqrt{2k}\,Me^{-R}\,.
\end{split}
\end{equation} 
If $R_0\geq\log k$, using \eqref{ues} and \eqref{les}, we obtain
\begin{align*}
\int_{\R^k}\big|\log|\langle a,u\rangle|\big|^\nu&\,d\sigma_k(a)=
\nu\int_0^{+\infty}R^{\nu-1}\sigma_k\big(\{a\in\R^k:\,\big|\log|\langle a,u\rangle|\big|>R\}\big)\,dR\\
&\leq\nu\int_0^{R_0} R^{\nu-1}\,dR + \nu\int_{R_0}^{+\infty} R^{\nu-1}\sigma_k\big(\{a\in\R^k: \,\big|\log|\langle a,u\rangle|\big|>R\}\big)\,dR\\
&\leq R_0^\nu + \nu\int_{R_0}^{+\infty} R^{\nu-1}\left(\frac{2^\rho ck}{R^\rho}+2\sqrt{2k}\,Me^{-R}\right)dR\,.
\end{align*}
Since $R^{\nu-1}e^{-R/2}\leq(2(\nu-1)/e)^{\nu-1}\leq\nu^{\nu-1}$ for $R\geq0$, and since $R_0\geq\log k$, we get 
\begin{align*}
J_k(u;\sigma_k,\nu)&\leq R_0^\nu+
\frac{2^\rho\nu ckR_0^{\nu-\rho}}{\rho-\nu}+
2\sqrt{2k}\,M\nu^\nu\int_{R_0}^\infty e^{-R/2}\,dR\\
&\leq R_0^\nu\left(1+\frac{2^\rho\nu ck}{(\rho-\nu)R_0^\rho}\right)+
4\sqrt{2}\,M\nu^\nu.
\end{align*}
The proposition follows by taking $R_0^\rho=k$. 
\end{proof}

We remark that hypothesis \eqref{tailc} holds for every 
$\rho>1$ in the case when the function $\phi$ has compact support. 
Particularly interesting is the case of the uniform distribution 
on the interval $[0,1]$, i.e. when $\phi=\chi_{[0,1]}$ 
is the characteristic function of $[0,1]$. 
In this case we obtain a much better bound for the moments 
$J_k(u;\sigma_k,\nu)$:

\begin{Proposition}\label{P:uniform}
Let $\sigma_k$ be the measure on $\R^k\subset\C^k$ defined in 
\eqref{e:riid} with $\phi=\chi_{[0,1]}$, so  
$d\sigma_k=\chi_{[0,1]^k}\,d\lambda_k$. For every integer $k\geq1$, 
every $\nu\geq1$, and every $u\in\C^k$ with $|u|=1$, we have 
\[J_k(u;\sigma_k,\nu)=
\int_{[0,1]^k}\big|\log|\langle a,u\rangle|\big|^\nu\,d\lambda_k(a)
\leq (\log k)^\nu+6\nu^\nu\,.\]
\end{Proposition}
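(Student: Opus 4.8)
The plan is to bound the moment $J_k(u;\sigma_k,\nu)=\int_{[0,1]^k}\big|\log|\langle a,u\rangle|\big|^\nu\,d\lambda_k(a)$ by splitting the logarithm into its positive and negative parts. Writing $u=s+it$ with $s,t\in\R^k$, on the cube $[0,1]^k$ each coordinate $a_j$ lies in $[0,1]$, so $|\langle a,s\rangle|\leq\sum_j|s_j|\leq\sqrt k\,|s|\leq\sqrt k$ and similarly $|\langle a,t\rangle|\leq\sqrt k$; hence $|\langle a,u\rangle|\leq\sqrt{2k}$ and the positive part of $\log|\langle a,u\rangle|$ is at most $\tfrac12\log(2k)$, which already contributes a term of order $(\log k)^\nu$. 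The real work is to control the negative part, i.e.\ the region where $|\langle a,u\rangle|$ is small, because $\sigma_k$ is no longer rotation invariant and Lemma \ref{L:ss} does not directly apply.

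For the small-value estimate I would follow the layer-cake / distribution-function approach already used in the proof of Proposition \ref{P:riid}: write
\[
J_k(u;\sigma_k,\nu)=\nu\int_0^{+\infty}R^{\nu-1}\,
\sigma_k\big(\{a\in[0,1]^k:\,\big|\log|\langle a,u\rangle|\big|>R\}\big)\,dR\,,
\]
and bound the distribution function. Since $1=|s|^2+|t|^2$, one of $s,t$ has norm $\geq1/\sqrt2$; assume $|s|\geq1/\sqrt2$ and pick a coordinate, say $s_1$, with $|s_1|\geq(2k)^{-1/2}$. Because $|\langle a,s\rangle|\leq|\langle a,u\rangle|$, small $|\langle a,u\rangle|$ forces small $|\langle a,s\rangle|$; integrating out the variable $a_1$ first, the condition $|\langle a,s\rangle|<e^{-R}$ confines $a_1$ to an interval of length $\leq 2e^{-R}/|s_1|\leq 2\sqrt{2k}\,e^{-R}$, and since the density is $\chi_{[0,1]^k}\leq1$ this yields
\[
\sigma_k\big(\{a\in[0,1]^k:\,\log|\langle a,u\rangle|<-R\}\big)\leq 2\sqrt{2k}\,e^{-R}.
\]
The large-value part is handled by the crude bound above: for $R>\tfrac12\log(2k)$ the set $\{|\log|\langle a,u\rangle||>R,\ \log|\langle a,u\rangle|>0\}$ is empty, so only the negative tail survives once $R$ exceeds a threshold of size $\log k$.

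I would then split the $R$-integral at $R_0=\log k$, bounding the integrand trivially by $R_0^\nu$ below $R_0$ and by the exponential tail $2\sqrt{2k}\,e^{-R}$ above it. Using $R^{\nu-1}e^{-R/2}\leq(2(\nu-1)/e)^{\nu-1}\leq\nu^{\nu-1}$ for $R\geq0$ together with $\sqrt k\,e^{-R_0/2}=\sqrt k\cdot k^{-1/2}=1$, the tail contribution becomes a constant multiple of $\nu^\nu$, while the first piece gives $(\log k)^\nu$. Collecting the two pieces produces a bound of the shape $(\log k)^\nu+6\nu^\nu$. The main obstacle is the loss of rotation invariance: I cannot invoke Lemma \ref{L:ss} to reduce to the one-variable integral $I_k$, so I must argue directly on the cube, and the delicate point is choosing the good coordinate $s_1$ with $|s_1|\geq(2k)^{-1/2}$ and integrating out exactly that variable to exploit the compact support and the uniform density; the bound $\sqrt k\,e^{-R_0/2}=1$ at the split point $R_0=\log k$ is what makes the constant come out as cleanly as $6\nu^\nu$ rather than growing with $k$.
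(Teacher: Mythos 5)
Your proposal is correct and follows essentially the same route as the paper: the layer-cake formula split at $R_0=\log k$, the observation that the upper tail vanishes for $R\gtrsim\tfrac12\log k$ on the cube, the lower-tail bound $2\sqrt{2k}\,e^{-R}$ obtained by choosing a coordinate with $|s_1|\geq(2k)^{-1/2}$ and integrating out $a_1$ (this is exactly the paper's estimate \eqref{les} with $M=1$), and the bound $R^{\nu-1}e^{-R/2}\leq\nu^{\nu-1}$ to make the tail contribute $4\sqrt2\,\nu^\nu<6\nu^\nu$. The only cosmetic difference is that you control the upper tail by the direct estimate $|\langle a,u\rangle|\leq\sqrt{2k}$ rather than the paper's union-bound inclusion; to cover $k=1$ cleanly use the sharper Cauchy--Schwarz bound $|\langle a,u\rangle|\leq\sqrt{k}$.
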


\begin{proof} We proceed as in the proof of Proposition \ref{P:riid}. Since 
\[\left\{a\in\R^k:\,|\langle a,u\rangle|>e^R\right\}
\subset \bigcup_{j=1}^k\big\{a_j\in\R:\,|a_j|>e^{R-\frac12\log k}\big\},\]  
we have that $\sigma_k\big(\{a\in\R^k:\,\log|\langle a,u\rangle|>R\}\big)=0$ 
for $R\geq\frac12\,\log k$. Moreover, by \eqref{les}, 
\[\sigma_k\big(\{a\in\R^k:\,\log|\langle a,u\rangle|<-R\}\big)
\leq 2\sqrt{2k}\,e^{-R}\,.\]
It follows that 
\begin{align*}
J_k(u;\sigma_k,\nu)&=
\nu\int_0^{+\infty}R^{\nu-1}\sigma_k
\big(\{a\in\R^k:\,\big|\log|\langle a,u\rangle|\big|>R\}\big)\,dR\\
&\leq\nu\int_0^{\log k} R^{\nu-1}\,dR + 
\nu\int_{\log k}^{+\infty} R^{\nu-1}\sigma_k
\big(\{a\in\R^k: \,\log|\langle a,u\rangle|<-R\}\big)\,dR\\
&\leq(\log k)^\nu + 2\sqrt{2k}\,
\nu\int_{\log k}^{+\infty} R^{\nu-1}e^{-R}\,dR\\
&\leq(\log k)^\nu+2\sqrt{2k}\,
\nu^\nu\int_{\log k}^\infty e^{-R/2}\,dR=
(\log k)^\nu+4\sqrt{2}\,\nu^\nu\,.
\end{align*}
\end{proof}

\section{Visualization of expected distributions}\label{S:pictures}
In this section we illustrate some equidistribution results 
by visualizing the convergence of the expected measures 
\(p^{-1}\mathbb{E}[Z_{f_p}]\) for \(p\to \infty\) in two explicit examples. 

\subsection{Random polynomials with Gaussian coefficients}
We consider here the space \((\mathbb{C}_p[z],(\cdot,\cdot))\) 
which consists of polynomials in one complex variable of degree at most \(p\). 
The inner product \((\cdot,\cdot)\) is given by the integration over 
the square 
\(Q=[-\frac{1}{2},\frac{1}{2}]\times[-\frac{1}{2},\frac{1}{2}]
\subset \mathbb{C}\), that is
\[(f,g)=\int_{Q}f(z)\overline{g(z)}\,d\lambda_2(z),
\,\,\,f,g\in \mathbb{C}_p[z]\,.\]

Given an orthonormal basis \(s_0,\ldots,s_p\) for this space, 
our purpose is to understand the expected distribution of zeros 
for random polynomials \(f_p\) of the form
\[f_p(z)=a_0s_0(z)+\ldots+a_ps_p(z)\,,\]
where \(a_0,\ldots,a_p\) are i.i.d. complex Gaussian random variables 
with mean zero and variance one. Theorem \ref{BL2} applies in this 
setting with $Y=Q$ and $\varphi=0$. Since the equilibrium measure 
$dd^cV_{Q,0}$ is supported on the boundary of the square \(\partial Q\), 
Theorem \ref{BL2} implies that the zeros of \(f_p\) are expected to 
distribute towards \(\partial Q\) when \(p\) becomes large. 
Drawing the expected distribution for various degrees \(p\) 
leads to the pictures in Figure \ref{fig:g1}.

\begin{figure}[H]
	\begin{center}
	\includegraphics[scale=0.4]{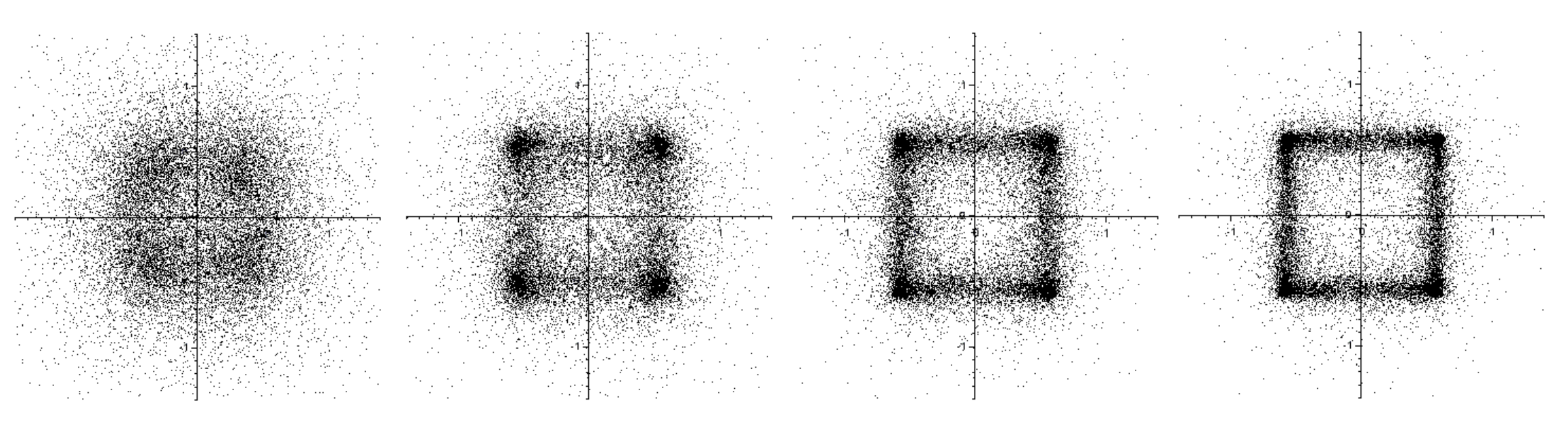}
	\end{center}
	\begin{flushleft}
		\small{(Made with Scilab, \textit{www.scilab.org}) 
		Expected distributions for zeros of random polynomials 
		\(f_p\) of degree \(p\) visualized using the law of large numbers. 
		From the left to the right depicted are the zeros contained in 
		\([-2,2]\times [-2,2]\subset\mathbb{C}\) of \(5000\) 
		random polynomials of degree 4, 1667 of degree \(12\), 
		833 of degree 24, and 500 of degree 40. 
		The polynomials \(f_p\) are randomly chosen with respect to 
		the Gaussian distribution on \((\mathbb{C}_p[z],(\cdot,\cdot))\). 
		The pictures were developed during a joint project with 
		Gerrit Herrmann (Regensburg).}  
	\end{flushleft} 
	\begin{center}
		\caption{Zero distributions for the square}\label{fig:g1}
	\end{center}
\end{figure}

To obtain the pictures we use the following strategy. 
Using the law of large numbers we can approximate 
the expected distribution by choosing a sequence of 
random polynomials of fixed degree \(p\), that is
\[\frac{1}{2\pi N p}\,\Delta\sum_{m=1}^{N}\log|f^{(m)}_p|\to 
p^{-1}\mathbb{E}(Z_{f_p}),\,\,\, N\to\infty.\]
By drawing the zeros of each of these polynomials as points 
in the plane we just illustrate the support of the measure 
\(\Delta\sum_{m=1}^{N}\log|f^{(m)}_p|\). Therefore we choose \(N\) 
depending on \(p\) such that \(Np\) is constant. 
Moreover, the fact that we can take $N$ small (and even $N=1$) 
for $p$ large is consistent with the second assertion of 
Theorem \ref{BL2} (see \eqref{asdist}).

Let us explain now how to construct the random polynomials \(f_p\). 
For \(p\) fixed we compute a positive definite hermitian 
\((p+1)\times (p+1)\) matrix \(S=(s_{lj})_{0 \leq l,j\leq p }\) 
with entries \(s_{lj}=(z^l,z^j)\). Applying the Cholesky decomposition to 
\(S\) we find (after inversion) a matrix \(R=(r_{lj})_{0 \leq l,j\leq p }\) 
with \(\operatorname{Id}=R^*SR\). Hence, we have that \(s_0,\ldots,s_p\) 
is an orthonormal Basis where \(s_j\) is given by
\[s_j(z)=\sum_{l=0}^{p}r_{lj}z^l,\,\,0\leq j\leq p.\]
Choosing Gaussian distributed  (pseudo) random numbers 
\(a_0,a_1,\ldots,a_p\) leads to the desired random polynomial 
\(f_p\).

\subsection{Random $SU_2$-polynomials with uniformly distributed coefficients}
We consider random \(SU_2\)-polynomials $f_p$ of degree \(p\), that is
\[f_p(z)=\sum_{j=0}^{p}a_j\sqrt{(p+1)\binom{p}{j}}\,z^j\]
where \(a_0,\ldots,a_p\) are i.i.d.\ real random numbers 
uniformly distributed on the interval \([0,1]\). 
Setting \(s_j(z)=\sqrt{(p+1)\binom{p}{j}}z^j\), \(0\leq j\leq p\) 
we have that \(s_0,\ldots,s_p\) is an orthonormal basis for the 
space \((\mathbb{C}_p[z],(\cdot,\cdot)_p)\) consisting of 
polynomials in one complex variable of degree at most \(p\) 
with inner product \((\cdot,\cdot)_p\) given by
\[(f,g)_p=\int_{\C}\frac{f(z)
\overline{g(z)}}{\pi(1+|z|^2)^{p+2}}\,d\lambda_2(z),\,\,\, 
f,g\in \C_p[z].\]
Theorem \ref{th1} applies in this case with the measures 
$\sigma_{p+1}$ from Proposition \ref{P:uniform}. 
It implies that for \(p\to\infty\) the zeros of \(f_p\) 
distribute to the measure 
\[d\mu(z)=\frac{1}{\pi(1+|z|^2)^{2}}\,d\lambda_2(z).\]
The measure \(d\mu\) is visualized in Figure~\ref{fig:g2}. 
Proceeding as in Example I we obtain pictures for the expected 
distributions of zeros \(p^{-1}\mathbb{E}(Z_{f_p})\) 
for various \(p\) (see Figure~\ref{fig:g3}). 
The computations in this example are easier since an orthonormal 
basis for \((\mathbb{C}_p[z],(\cdot,\cdot)_p)\) is already given. 

\begin{figure}[t]
	\begin{center}
\includegraphics[scale=0.5]{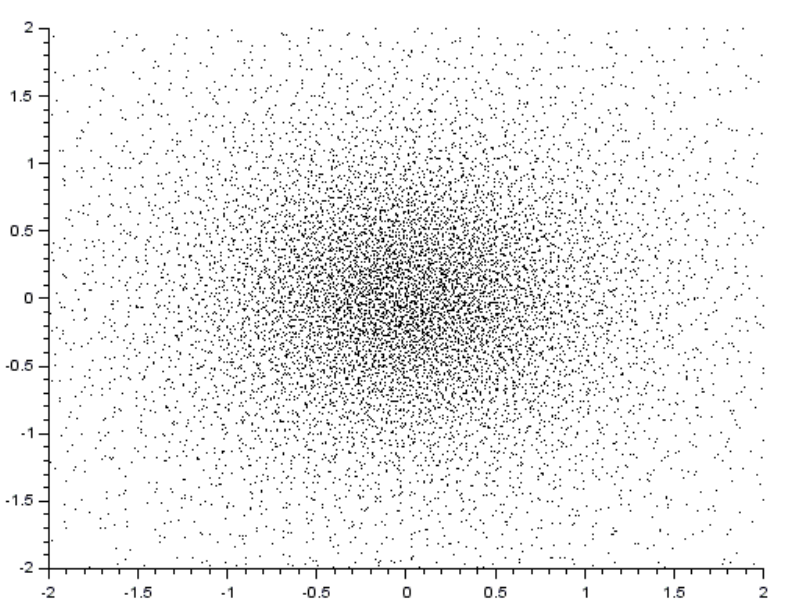}
	\end{center}
	\begin{flushleft}
		\small{(Made with Scilab, \textit{www.scilab.org}) 
		All points, which lie in the domain \([-2,2]\times [-2,2]\subset\mathbb{C}\), 
		out of 10240 randomly chosen points in \(\mathbb{C}\) 
		with respect to the distribution \(d\mu\).} 
	\end{flushleft} 
	\begin{center}
		\caption{Visualization of \(d\mu\)}\label{fig:g2}
	\end{center}
\end{figure}
In Figure~\ref{fig:g3}, starting with random polynomials 
of degree \(p=1\), we see that all the zeros lie on the 
negative real axis. That is clear since polynomials of 
the form \(z\mapsto a_0+a_1z\) with \(a_0,a_1\in\mathbb{R}_+\) 
have one zero given by \(z=-\frac{a_0}{a_1}\in\mathbb{R}_-\). 
For degree \(p=2\) we see that all the zeros have negative real 
part which immediately follows from solving the equation 
\(a_0+a_1z+a_2z^2=0\) with \(a_0,a_1,a_2> 0\). 
We see that the zeros of \(f_p\) avoid a region 
around the positive real axis which shrinks as $p$ 
becomes larger. Comparing Figure~\ref{fig:g3} 
to Figure~\ref{fig:g2} we see that the zeros of 
$f_p$ distribute to the measure $\nu$ as $p$ increases.

The pictures and all computations were made using Scilab 
(\textit{www.scilab.org}). It is important to mention that 
all the numerical computations and approximations generate 
some errors which we do not specify here. Hence, we do not 
claim the visualization of the true expected distributions. 

\begin{figure}[H]
	\begin{center}
\includegraphics[scale=0.29]{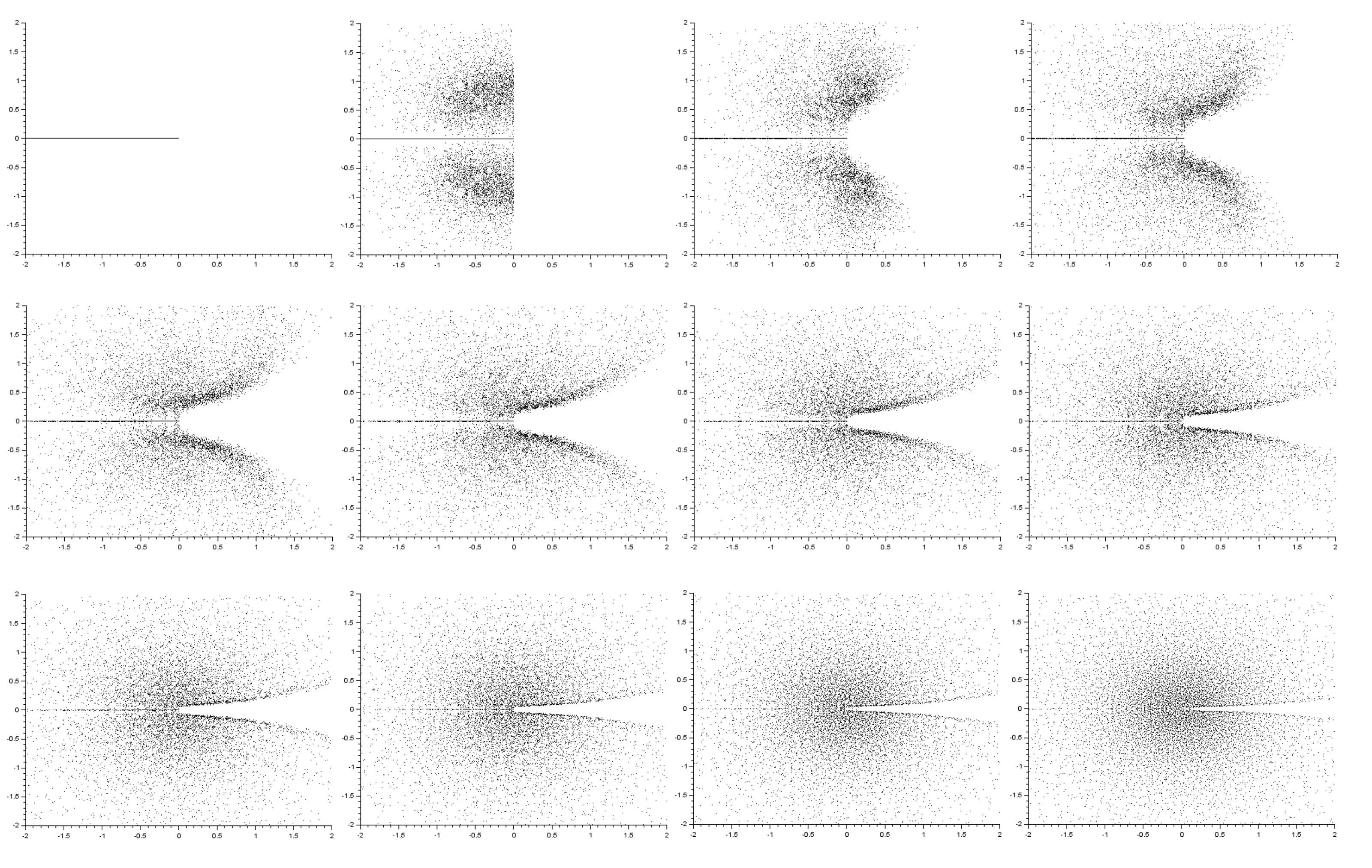}
	\end{center}
	\begin{flushleft}
		\small{(Made with Scilab, \textit{www.scilab.org}) 
		Expected distributions for zeros of random polynomials 
		\(f_p\) of degree \(p\) visualized using the law of large numbers. 
		Depicted are the zeros in the domain \([-2,2]\times [-2,2]\) of 
		\(5\cdot2^{13-j}\) random polynomials of degree \(p=2^{j-1}\) 
		with \(j=1,2,\ldots,12\), starting from the upper left down to 
		the bottom right corner. The polynomials \(f_p\) are random 
		\(SU_2\)-polynomials with real coefficients uniformly 
		distributed in the interval \([0,1]\).}
	\end{flushleft}
	\begin{center}
		\caption{Zero distribution for \(SU_2\)-polynomials}
		\label{fig:g3}
	\end{center}
\end{figure}



\end{document}